\newif\ifJOURNAL
\newif\ifWP
\newif\ifBASIC
\newif\ifFULL
\newif\ifLATIN
\newif\ifnotJOURNAL	
\newif\ifnotWP		
\newif\ifnotLATIN	
  \newcommand{\ShaferLevy}{shafer/etal:arXiv0905}
  \newcommand{\ShaferVovk}{shafer/vovk:2006SS}
  \newcommand{\ShaferLevy}{GTP27}
  \newcommand{\ShaferVovk}{GTP4}
\numberwithin{equation}{section}
\theoremstyle{plain}
\newif\iftwodates
\renewcommand\maketitle{\begin{titlepage}%
  \let\footnotesize\small
  \let\footnoterule\relax
  \let \footnote \thanks
  \null\vfil
  \vskip 30\p@
  \begin{center}%
    {\LARGE \bf \@title \par}%
    \vskip 3em%
    {\large
     \lineskip .75em%
     \begin{tabular}[t]{c}%
       \@author
     \end{tabular}\par}%
     \vskip 1.5em%
  \end{center}\par
  \vfill
  \begin{center}
    \raisebox{1.5cm}{\includegraphics[width=0.58\textwidth]{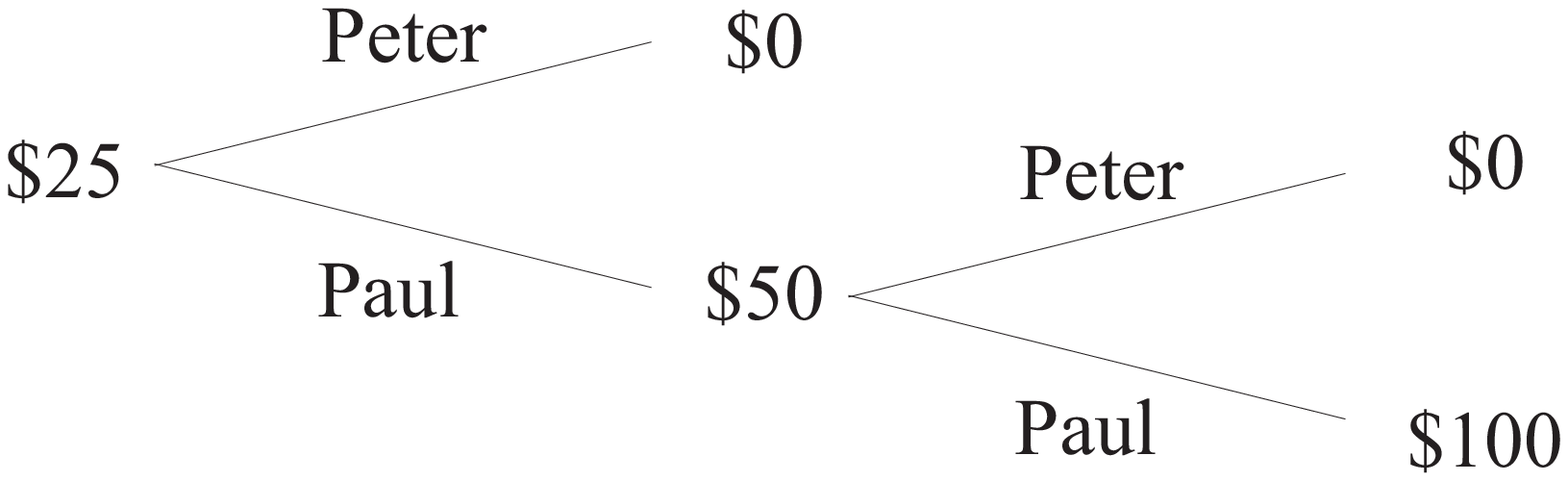}}%
    \hskip 3em%
    \includegraphics[width=0.29\textwidth]{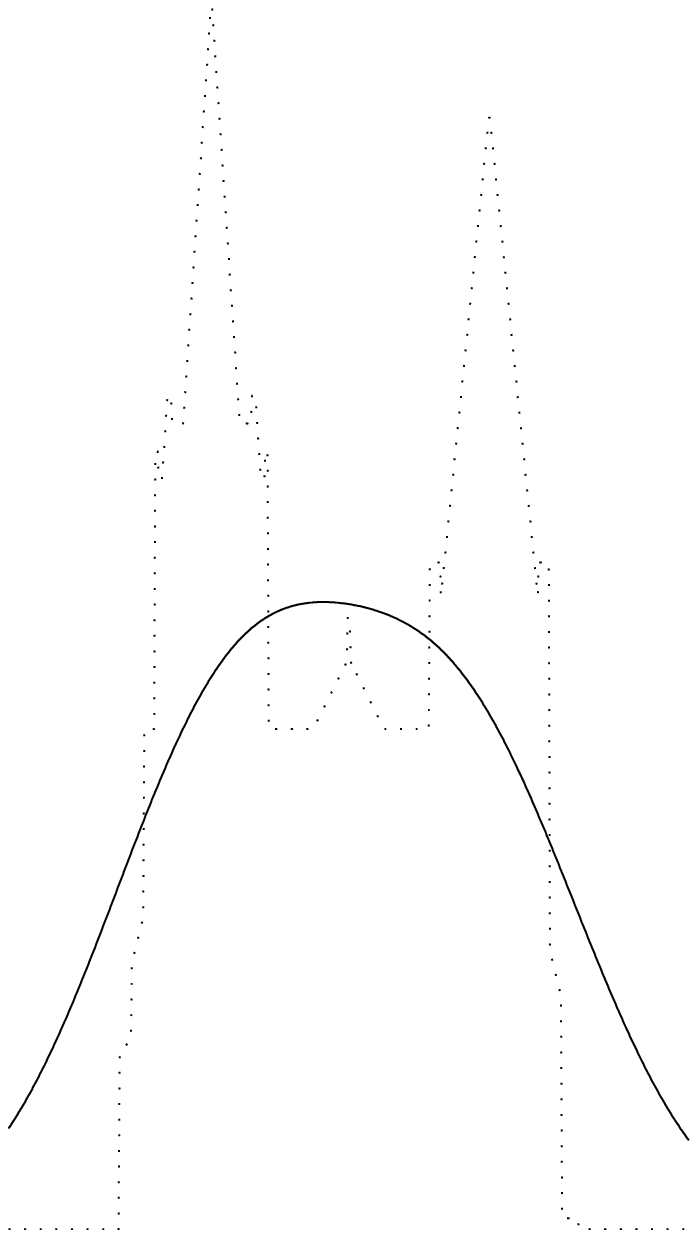}%
  \end{center}
  \@thanks
  \vfill
  \begin{center}
    {\large \bf The Game-Theoretic Probability and Finance Project}
  \end{center}
  \begin{center}
    {\large Working Paper \#\No}
  \end{center}
  \begin{center}
    {\iftwodates\large First posted \firstposted.
    Last revised \@date.\else\large\@date\fi}
  \end{center}
  \begin{center}
    Project web site:\\
    http://www.probabilityandfinance.com
  \end{center}
  \end{titlepage}%
  \setcounter{footnote}{0}%
  \global\let\thanks\relax
  \global\let\maketitle\relax
  \global\let\@thanks\@empty
  \global\let\@author\@empty
  \global\let\@date\@empty
  \global\let\@title\@empty
  \global\let\title\relax
  \global\let\author\relax
  \global\let\date\relax
  \global\let\and\relax
}
\renewenvironment{abstract}{%
  \titlepage
  \null\vfil
  \@beginparpenalty\@lowpenalty
  \begin{center}%
    \Large \bfseries \abstractname
    \@endparpenalty\@M
  \end{center}}%
  {\par\vfill\tableofcontents\endtitlepage}
\renewenvironment{thebibliography}[1]
  {\section*{\refname}%
  \addcontentsline{toc}{section}{\refname}
  \@mkboth{\MakeUppercase\refname}{\MakeUppercase\refname}%
  \list{\@biblabel{\@arabic\c@enumiv}}%
    {\settowidth\labelwidth{\@biblabel{#1}}%
    \leftmargin\labelwidth
    \advance\leftmargin\labelsep
    \@openbib@code
    \usecounter{enumiv}%
    \let\p@enumiv\@empty
    \renewcommand\theenumiv{\@arabic\c@enumiv}}%
    \sloppy
    \clubpenalty4000
    \@clubpenalty \clubpenalty
    \widowpenalty4000%
    \sfcode`\.\@m}
    {\def\@noitemerr
    {\@latex@warning{Empty `thebibliography' environment}}%
  \endlist}
  \newcommand{\bluebegin}{\begingroup\color{blue}}
  \newcommand{\blueend}{\endgroup}
\newcommand{\Vladimir}{Vladimir}
\newcommand{\DOT}{.}
\newcommand{\st}{\mathrel{|}}
\newcommand{\givn}{\mathrel{|}}
\newcommand{\bbbr}{\mathbb{R}}		
\newcommand{\bbbn}{\mathbb{N}}		
\newcommand{\bbbp}{\mathbb{P}}		
\newcommand{\bbbe}{\mathbb{E}}		
\DeclareMathOperator{\Prob}{\bbbp}			
\DeclareMathOperator{\UpProb}{\bbbp^{{\rm game}}}	
\DeclareMathOperator{\LowProb}{\underline{\bbbp}^{{\rm game}}}
\DeclareMathOperator{\UpProbMeas}{\bbbp^{{\rm meas}}}	
\DeclareMathOperator{\LowProbMeas}{\underline{\bbbp}^{{\rm meas}}}
\DeclareMathOperator{\UpExpect}{\bbbe^{{\rm game}}}	
\DeclareMathOperator{\UpExpectMeas}{\bbbe^{{\rm meas}}}	
\newcommand{\FFF}{\mathcal{F}}		
\theoremstyle{plain}
\newtheorem{theorem}{Theorem}
\newtheorem*{ChoquetTheorem}{Choquet's Capacitability Theorem}
\newtheorem{proposition}{Proposition}
\newtheorem{lemma}{Lemma}
\theoremstyle{definition}
\newtheorem{remark}{Remark}
\newlength{\IndentI}
\newlength{\IndentII}
\newlength{\IndentIII}
\newlength{\IndentIV}
\newlength{\WidthI}
\newlength{\WidthII}
\newlength{\WidthIII}
\newlength{\WidthIV}
  \title{Prequential probability:\\game-theoretic = measure theoretic}
  \author{Vladimir Vovk}
  \newcommand{\No}{27}
  \newcommand{\firstposted}{January 26, 2009}
  \title{Prequential probability:\\game-theoretic = measure theoretic}
  \author{Vladimir Vovk\\
  \texttt{http://vovk.net}}
\begin{document}
\ifJOURNAL
\begin{frontmatter}
  \title{Prequential probability:\protect\\
    game-theoretic = measure theoretic\protect\thanksref{T}}
  \runtitle{Prequential probability}
\thankstext{T}{This work was supported in part by EPSRC (grant EP/F002998/1).}

\begin{aug}
  \author{\fnms{Vladimir} \snm{Vovk}
    \ead[label=e]{vovk@cs.rhul.ac.uk}
    \ead[label=u,url]{http://vovk.net}}

\runauthor{Vladimir Vovk}

\affiliation{Royal Holloway, University of London}

\address{Vladimir Vovk\\
  Department of Computer Science\\
  Royal Holloway, University of London\\
  Egham, Surrey TW20 0EX, UK\\
  \printead{e}\\
  \printead{u}}
\end{aug}
\fi

\ifnotJOURNAL
  \maketitle
\fi

\begin{abstract}
  This article continues study of the prequential framework
  for evaluating a probability forecaster.
  Testing the hypothesis that the sequence of forecasts issued by the forecaster
  is in agreement with the observed outcomes
  can be done using prequential notions of probability.
  It turns out that there are two natural notions of probability
  in the prequential framework:
  game-theoretic, whose idea goes back to von Mises and Ville,
  and measure-theoretic, whose idea goes back to Kolmogorov.
  The main result of this article is that,
  in the case of predicting binary outcomes,
  the two notions of probability in fact coincide
  on the analytic sets
  (in particular, on the Borel sets).
\end{abstract}

\ifJOURNAL
  \begin{keyword}[class=AMS]
    \kwd[Primary ]{62M20}
    \kwd[; secondary ]{60G48}
    \kwd{28A12}
    \kwd{62A01}
  \end{keyword}

  \begin{keyword}
    \kwd{prequential statistics}
    \kwd{game-theoretic probability}
    \kwd{Choquet capacity}
  \end{keyword}
\end{frontmatter}
\fi

\section{Background}
\label{sec:background}

The prequential framework for evaluating probability forecasters
was introduced by A.~P.~Dawid in \cite{dawid:1984} and \cite{dawid:1985}.
Suppose two players, Forecaster and Reality,
interact according to the following protocol.

\bigskip

\noindent
\textsc{Binary prequential protocol}\nopagebreak

\smallskip

\parshape=4
\IndentI  \WidthI
\IndentII \WidthII
\IndentII \WidthII
\IndentI  \WidthI
\noindent
FOR $n=1,2,\dots$:\\
  Forecaster announces $p_n\in[0,1]$.\\
  Reality announces $y_n\in\{0,1\}$.\\
END FOR.

\bigskip

\noindent
The interpretation is that $p_n$ is Forecaster's subjective probability that $y_n=1$
after having observed $y_1,\ldots,y_{n-1}$
and taking account of all other relevant information
available at the time of issuing the forecast.
We will refer to $p_n$ as \emph{forecasts} and to $y_n$ as \emph{outcomes}.
More generally, the outcomes take values
in an arbitrary measurable space
and the forecasts are probability distributions on that measurable space,
but in this article we will restrict our attention to binary outcomes
(as in \cite{dawid:1985});
this will be further discussed at the end of Section \ref{sec:application}.

In general,
the two players possess perfect information
about each other's moves:
Forecaster chooses $p_1$,
Reality observes $p_1$ and chooses $y_1$,
Forecaster observes $y_1$ and chooses $p_1$,
etc.
We might, however, be interested in ``oblivious'' strategies for a player,
especially for Reality,
who may generate her moves randomly
according to a probability measure on $\{0,1\}^{\infty}$
chosen in advance.
On the other hand,
the players may also react to events outside the protocol.

Dawid's \emph{prequential principle}
(see, e.g., \cite{dawid:1984,dawid:1985,dawid/vovk:1999})
says that when testing the adequacy of the forecaster
in light of the outcomes $y_n$
we should only use the forecasts $p_n$,
not the forecasting strategy (if any)
that Forecaster used to produce $p_n$.
In this article we will be only interested in testing procedures
that respect the prequential principle.
In other words,
we will be interested in testing the sequence
\begin{equation}\label{eq:sequence}
  (p_1,y_1,p_2,y_2,\ldots)
\end{equation}
of forecast/outcome pairs $(p_n,y_n)$ for agreement.
This sequence may be infinite or finite.

There are two main ways to test sequences (\ref{eq:sequence})
for agreement,
which we will call game-theoretic and measure-theoretic.
For concreteness,
suppose the sequence (\ref{eq:sequence}) does not satisfy
\begin{equation}\label{eq:calibration}
  \lim_{n\to\infty}
  \frac{1}{n}
  \sum_{i=1}^n
  (y_i-p_i)
  =
  0
\end{equation}
(i.e., the sequence is not ``unbiased in the large'';
see, e.g., \cite{dawid/vovk:1999}
for numerous other ways of testing probability forecasts).
What do we mean when we say that violation of (\ref{eq:calibration})
evidences lack of agreement?

Two ways to answer this question
correspond to two different approaches to the foundations of probability theory.
One version of the game-theoretic answer is that we can gamble against the forecasts
is such a way that, risking only one monetary unit,
we can become infinitely rich
when (\ref{eq:calibration}) is violated.
The measure-theoretic answer is that,
no matter what strategy Forecaster is using,
the probability of (\ref{eq:calibration}) is one;
therefore,
if (\ref{eq:calibration}) is violated,
an \emph{a priori} specified event of probability zero
(given by the negation of the formula (\ref{eq:calibration}))
has occurred.
Both becoming infinitely rich
and the occurrence of a pre-specified event of probability zero
can be interpreted as lack of agreement between the forecasts and outcomes.

In fact,
even the first answer can be expressed in terms of probability.
The game-theoretic approach to the foundations of probability
is as old as the standard measure-theoretic
based on Kolmogorov's axioms
(\cite{kolmogorov:1933};
see \cite{\ShaferVovk} for the historical background).
An imperfect version of the game-theoretic approach
was championed by von Mises \cite{mises:1919}
and formalized, in different ways,
by Wald \cite{wald:1937} and Church \cite{church:1940}.
Ville \cite{ville:1939} gave an example demonstrating
that von Mises's notion of a gambling strategy was too restrictive,
and introduced a more general class of gambling strategies
and a closely related notion of a martingale.
However, the formal notion of game-theoretic probability
was introduced only recently
(see, e.g., \cite{vovk:1993logic}, \cite{dawid/vovk:1999},
or, for a much fuller treatment, \cite{shafer/vovk:2001}).
In particular,
an event has zero game-theoretic probability
if and only if there is a gambling strategy that,
risking at most one monetary unit,
makes the player infinitely rich when the event happens.

The notion of game-theoretic probability makes
the game-theoretic and measure-theoretic justifications
of the testing procedure based on (\ref{eq:calibration})
look very similar:
we just say that the probability
(either game-theoretic or measure-theoretic)
of (\ref{eq:calibration}) being violated is zero.
The main result of this article says
that the two notions of probability coincide on the analytic sets,
and so the two approaches to testing probability forecasts are equivalent,
in the prequential framework.
The restriction to the analytic, and even Borel, sets is not a limitation
in all practically interesting cases.

For testing procedures based on events of probability zero
(basically, on strong laws of probability theory,
such as (\ref{eq:calibration})),
a special case of our result is sufficient:
it is sufficient to know that a Borel set
has zero game-theoretic probability
if and only if
it has zero measure-theoretic probability.
Our full result is also applicable to events of merely low,
not zero,
probability.
For example, we could reject the hypothesis of agreement if
\begin{equation}\label{eq:finite-calibration}
  \frac1n
  \left|
    \sum_{i=1}^n
    (y_i-p_i)
  \right|
  \ge
  C\sqrt{n}
\end{equation}
for prespecified large numbers $C$ and $n$.
Our result shows that this and similar procedures
have equally strong game-theoretic and measure-theoretic justifications.
Notice that in the case of (\ref{eq:finite-calibration})
our decision to reject the hypothesis of agreement
can be made after observing a finite sequence,
$(p_1,y_1,\ldots,p_n,y_n)$.

\section{This article}

In the following two sections,
\ref{sec:game} and \ref{sec:measure},
we formally introduce in the prequential framework
the two notions of probability discussed in the previous section.
The main result of this article is Theorem \ref{thm:main}
in Section \ref{sec:result},
asserting the coincidence of the two kinds of probability
on the analytic sets.
This result has several predecessors.
In the situation where Forecaster's strategy is fixed,
Ville (\cite{ville:1939}, Theorems 1 and 2 in Chapter IV)
showed that a set $E$ has game-theoretic probability zero
if and only if it has measure-theoretic probability zero.
(Ville stated this result in slightly different terms,
without explicit use of game-theoretic probability.)
This was generalized in \cite{shafer/vovk:2001}
(Proposition 8.13)
to the statement that game-theoretic and measure-theoretic probability
coincide on the Borel sets.
In the case of a finite-horizon protocol,
a statement analogous to Theorem \ref{thm:main}
was proved by Shafer in \cite{shafer:1996art} (Proposition 12.7.4).
The special case of Theorem \ref{thm:main}
asserting the coincidence of game-theoretic and measure-theoretic probability
on the open sets
was first proved in \cite{vovk/shen:2008journal} (Theorem 2).

In the same Section \ref{sec:result}
we also prove that measure-theoretic probability never exceeds
game-theoretic probability.
This simple statement is true for all sets,
not just analytic.
The proof of the opposite inequality is given in Section \ref{sec:proof}.
It relies on two fundamental results:
Choquet's capacitability theorem \cite{choquet:1954}
and L\'evy's zero-one law in its game-theoretic version
recently found in \cite{\ShaferLevy}.

\subsection*{Some notation and definitions}

The set of all natural (i.e., positive integer) numbers is denoted $\bbbn$,
$\bbbn:=\{1,2,\ldots\}$.
As always, $\bbbr$ is the set of all real numbers.

Let $\Omega:=\{0,1\}^{\infty}$
be the set of all infinite binary sequences
and $\Omega^{\diamond}:=\cup_{n=0}^{\infty}\{0,1\}^n$
be the set of all finite binary sequences.
Set $\Pi:=([0,1]\times\{0,1\})^{\infty}$
and $\Pi^{\diamond}:=\cup_{n=0}^{\infty}([0,1]\times\{0,1\})^n$.
The empty element (sequence of length zero)
of both $\Omega^{\diamond}$ and $\Pi^{\diamond}$
will be denoted $\Lambda$.
In our applications,
the elements of $\Omega$ and $\Omega^{\diamond}$ will be sequences of outcomes
(infinite or finite),
and the elements of $\Pi$ and $\Pi^{\diamond}$
will be sequences of forecasts and outcomes (infinite or finite).
The set $\Pi$ will sometimes be referred to as the \emph{prequential space}.

For $x\in\Omega^{\diamond}$,
let $\Gamma(x)\subseteq\Omega$ be the set of all infinite extensions of $x$
that belong to $\Omega$.
Similarly, for $x\in\Pi^{\diamond}$,
$\Gamma(x)\subseteq\Pi$
is the set of all infinite extensions of $x$
that belong to $\Pi$.
For each $\omega=(y_1,y_2,\ldots)\in\Omega$ and $n\in\bbbn\cup\{0\}$,
set $\omega^n:=(y_1,\ldots,y_n)$.
Similarly, for each $\pi=(p_1,y_1,p_2,y_2,\ldots)\in\Pi$ and $n\in\bbbn\cup\{0\}$,
set $\pi^n:=(p_1,y_1,\ldots,p_n,y_n)$.

In some proofs and remarks
we will be using the following notation, for $n\in\bbbn\cup\{0\}$:
$\Omega^n:=\{0,1\}^n$ is the set of all finite binary sequences of length $n$;
$\Omega^{\ge n}:=\cup_{i=n}^{\infty}\Omega^i$
is the set of all finite binary sequences of length at least
$n$;
$\Pi^n:=([0,1]\times\{0,1\})^n$;
$\Pi^{\ge n}:=\cup_{i=n}^{\infty}([0,1]\times\{0,1\})^i$.

\section{Game-theoretic prequential probability}
\label{sec:game}

A \emph{farthingale} is a function $V:\Pi^{\diamond}\to(-\infty,\infty]$ satisfying
\begin{multline}\label{eq:farthingale}
   V(p_1,y_1,\ldots,p_{n-1},y_{n-1})\\
   =
   (1-p_n)
   V(p_1,y_1,\ldots,p_{n-1},y_{n-1},p_n,0)\\
   +
   p_n
   V(p_1,y_1,\ldots,p_{n-1},y_{n-1},p_n,1)
\end{multline}
for all $n\in\bbbn$ and all $(p_1,y_1,p_2,y_2,\ldots)\in\Pi$;
the product $0\infty$ is defined to be $0$.
If we replace ``$=$'' by ``$\ge$'' in (\ref{eq:farthingale}),
we get the definition of a \emph{superfarthingale}.
These are prequential versions of the standard notions
of martingale and supermartingale.
We will be interested mainly in non-negative farthingales and superfarthingales.

The value of a farthingale can be interpreted
as the capital of a gambler
betting according to the odds announced by Forecaster.
In the case of superfarthingales,
the gambler is allowed to throw away part of his capital.

Game-theoretic probability can be introduced as either upper or lower probability;
in this article the former is more convenient
(and was used in the informal discussion of Section \ref{sec:background}).
A \emph{prequential event} is a subset of $\Pi$.
The \emph{upper game-theoretic probability} of a prequential event $E$ is
\begin{equation}\label{eq:upper-game}
  \UpProb(E)
  :=
  \inf
  \left\{
    a
    \st
    \exists V:
    V(\Lambda)=a
    \text{ and }
    \forall\pi\in E:
    \limsup_n V(\pi^n)\ge1
  \right\},
\end{equation}
where $V$ ranges over the non-negative farthingales.
It is clear that we will obtain the same notion of upper game-theoretic probability
if we replace the $\ge$ in (\ref{eq:upper-game}) by $>$,
replace $\limsup$ by $\sup$ or $\liminf$
(we can always stop when 1 is reached),
or allow $V$ to range over the non-negative superfarthingales.

We will need the following property of countable subadditivity
of game-theoretic probability.

\begin{lemma}\label{lem:subadditivity}
  For any sequence $E_1,E_2,\ldots$ of prequential events,
  \begin{equation*}
    \UpProb
    \left(
      \cup_{i=1}^{\infty}
      E_i
    \right)
    \le
    \sum_{i=1}^{\infty}
    \UpProb(E_i).
  \end{equation*}
  In particular,
  if $\UpProb(E_i)=0$ for all $i$,
  then $\UpProb(\cup_{i=1}^{\infty}E_i)=0$.
\end{lemma}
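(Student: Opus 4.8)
The plan is the familiar "weighted sum of betting strategies" construction. We may assume $\sum_{i=1}^{\infty}\UpProb(E_i)<\infty$, since otherwise the asserted inequality is trivial; in particular each $\UpProb(E_i)$ is then finite. Fix $\epsilon>0$. Using the definition of $\UpProb(E_i)$ as an infimum, for each $i$ choose a non-negative farthingale $V_i$ with $V_i(\Lambda)\le\UpProb(E_i)+\epsilon 2^{-i}$ and $\limsup_n V_i(\pi^n)\ge1$ for every $\pi\in E_i$.

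Next I would set $V:=\sum_{i=1}^{\infty}V_i$, a function from $\Pi^{\diamond}$ to $[0,\infty]$, and verify that $V$ is again a non-negative farthingale. Non-negativity is obvious; the only point to check is the defining identity (\ref{eq:farthingale}). It follows by summing that identity over $i$ for the individual $V_i$ and using that, for non-negative extended reals, interchanging the two summations is legitimate and the constants $1-p_n$ and $p_n$ may be pulled out of a convergent-or-divergent series of non-negative terms (with the convention $0\infty=0$, so that the cases $p_n\in\{0,1\}$ cause no difficulty). Hence $V(\Lambda)=\sum_{i=1}^{\infty}V_i(\Lambda)\le\sum_{i=1}^{\infty}\UpProb(E_i)+\epsilon$.

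Finally, for any $\pi\in\cup_{i=1}^{\infty}E_i$ pick $i_0$ with $\pi\in E_{i_0}$; since every $V_i$ is non-negative, $V(\pi^n)\ge V_{i_0}(\pi^n)$ for all $n$, so $\limsup_n V(\pi^n)\ge\limsup_n V_{i_0}(\pi^n)\ge1$. Thus $V$ is a non-negative farthingale witnessing $\UpProb(\cup_{i}E_i)\le V(\Lambda)\le\sum_{i}\UpProb(E_i)+\epsilon$, and letting $\epsilon\downarrow0$ gives the claim; the "in particular" part is then immediate. There is no genuine obstacle here: the only step that calls for a moment's care is the extended-real-arithmetic check that a countable sum of non-negative farthingales is a farthingale.
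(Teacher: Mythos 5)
Your proof is correct and follows exactly the paper's approach: the paper's entire proof is the one-line observation that a countable sum of non-negative farthingales is again a non-negative farthingale, and you have simply filled in the standard details (the $\epsilon 2^{-i}$ slack, the interchange of sums in $[0,\infty]$, and the pointwise lower bound $V\ge V_{i_0}$).
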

\begin{proof}
  It suffices to notice that the sum of a sequence of non-negative farthingales
  is again a non-negative farthingale.
\end{proof}

\section{Measure-theoretic prequential probability}
\label{sec:measure}

A \emph{forecasting system} is a function $\phi:\Omega^{\diamond}\to[0,1]$.
Let $\Phi$ be the set of all forecasting systems.
For each $\phi\in\Phi$
there exists a unique probability measure $\Prob_{\phi}$
on $\Omega$ (equipped with the Borel $\sigma$-algebra)
such that, for each $x\in\Omega^{\diamond}$,
$\Prob_{\phi}(\Gamma(x1)) = \phi(x)\Prob_{\phi}(\Gamma(x))$.
(In other words,
such that $\phi(x)$ is a version of the conditional probability,
according to $\Prob_{\phi}$,
that $x$ will be followed by $1$.)
The notion of a forecasting system is close
to that of a probability measure on $\Omega$:
the correspondence $\phi\mapsto\Prob_{\phi}$
becomes an isomorphism
if we only consider forecasting systems
taking values in the open interval $(0,1)$
and probability measures taking positive values
on the sets $\Gamma(x)$, $x\in\Omega^{\diamond}$.

For each sequence $(y_1,\ldots,y_n)\in\Omega^{\diamond}$
and each forecasting system $\phi\in\Phi$,
let
\begin{equation*}
 (y_1,\ldots,y_n)^{\phi}
 :=
 (\phi(\Lambda),y_1,\phi(y_1),y_2,\ldots,\phi(y_1,\ldots,y_{n-1}),y_n)
 \in
 \Pi^{\diamond}.
\end{equation*}
Similarly,
for each $(y_1,y_2,\ldots)\in\Omega$ and each $\phi\in\Phi$,
\begin{equation*}
 (y_1,y_2,\ldots)^{\phi}
 :=
 (\phi(\Lambda),y_1,\phi(y_1),y_2,\phi(y_1,y_2),y_3,\ldots)
 \in
 \Pi.
\end{equation*}

We can apply the idea of measure-theoretic probability to prequential events as follows,
in the spirit of \cite{huber:1981}, Section 10.2.
For each forecasting system $\phi$ and prequential event $E\subseteq\Pi$,
define
\begin{equation*}
  \Prob^{\phi}(E)
  :=
  \Prob_{\phi}
  \left\{
    \omega\in\Omega
    \st
    \omega^{\phi}\in E
  \right\}
  =
  \Prob_{\phi}(E^{\phi}),
\end{equation*}
where
$
  E^{\phi}
  :=
  \left\{
    \omega\in\Omega
    \st
    \omega^{\phi}\in E
  \right\}
$
and $\Prob_{\phi}(A)$ is understood, in general, as the outer measure of $A$,
i.e., as $\inf_{B}\Prob_{\phi}(B)$,
$B$ ranging over the Borel sets containing $A$.
The convention about using the outer measure is important only for our proofs,
not for the statement of the main result:
according to Luzin's theorem
(see, e.g., \cite{kechris:1995}, Theorem 21.10),
every analytic set is universally measurable,
and $E^{\phi}$ is analytic whenever $E$ is.
Now we define the \emph{upper measure-theoretic probability} of $E$ as
\begin{equation}\label{eq:upper-measure}
  \UpProbMeas(E)
  :=
  \sup_{\phi}\Prob^{\phi}(E).
\end{equation}

\begin{remark}\label{rem:naive}
  Our definition (\ref{eq:upper-measure})
  is not fully adequate from the intuitive point of view:
  even if we are willing to assume that Forecaster follows some forecasting strategy
  (which is a non-trivial assumption: cf.\ the discussion in \cite{dawid:1985},
  pp.~1255--1256),
  why should this forecasting strategy depend only on the past outcomes?
  For example,
  a meteorologist forecasting rain might have data about temperatures, winds, etc.
  (See \cite{dawid:1985}, Section 9, for further discussion.)
  A more satisfactory definition would involve a supremum
  over all probability spaces equipped with a filtration
  and for each such a probability space
  a further supremum over all forecasting systems
  adapted to the corresponding filtration
  (with a natural more general definition of a forecasting system).
  Our definition (\ref{eq:upper-measure})
  is the simplest one mathematically
  and leads to the strongest inequality
  $\UpProb(E)\le\UpProbMeas(E)$
  (for the analytic sets),
  which is the non-trivial part of our main result, Theorem \ref{thm:main}.
\end{remark}

\section{Main result}
\label{sec:result}

Now we are have all ingredients needed to state our main result.

\begin{theorem}\label{thm:main}
  For all analytic sets $E\subseteq\Pi$,
  $\UpProb(E)=\UpProbMeas(E)$.
\end{theorem}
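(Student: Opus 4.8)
The plan is to prove the two inequalities between $\UpProb(E)$ and $\UpProbMeas(E)$ separately. That $\UpProbMeas(E)\le\UpProb(E)$ holds for \emph{every} prequential event $E$ is the easy half, and I would prove it (this is the auxiliary claim promised for Section~\ref{sec:result}) by Ville's maximal inequality: if $V$ is a non-negative farthingale with $V(\Lambda)=a$ and $\limsup_nV(\pi^n)\ge1$ on $E$, then for each forecasting system $\phi$ the process $k\mapsto V\bigl((y_1,\dots,y_k)^{\phi}\bigr)$ is a non-negative $\Prob_{\phi}$-martingale started at $a$, so the Borel set $\bigl\{\omega:\limsup_kV\bigl((y_1,\dots,y_k)^{\phi}\bigr)\ge1\bigr\}$, which contains $E^{\phi}$, has $\Prob_{\phi}$-probability at most $a$; hence $\Prob^{\phi}(E)\le a$, and taking the supremum over $\phi$ and then the infimum over $V$ gives the claim.

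Next I would establish the reverse inequality, $\UpProb(K)\le\UpProbMeas(K)$, for \emph{closed} sets $K$. The key input is the finite-horizon coincidence of game-theoretic and measure-theoretic upper probability — essentially Shafer's Proposition~12.7.4 in \cite{shafer:1996art}, proved by backward induction with a two-point linear-programming step at each round, ``$\sup$ over Forecaster's move $p_n\in[0,1]$'' and ``$p_n$-weighted average over Reality's outcome'' appearing symmetrically on the two sides. Applying it to the cylinder sets $C_N$, where $C_N$ is the cylinder over the projection of $K$ onto its first $N$ coordinates (so $C_N\downarrow K$), and extending a finite-horizon test strategy to an infinite-horizon one by freezing it after its horizon, I get $\UpProb(K)\le\UpProb(C_N)\le\UpProbMeas(C_N)$ for every $N$; since $\UpProbMeas$ is continuous from above along decreasing sequences of compact sets — a consequence of the compactness of the space $\Phi=[0,1]^{\Omega^{\diamond}}$ of forecasting systems together with a Portmanteau-type estimate for $\phi\mapsto\Prob_{\phi}$ — this yields $\UpProb(K)\le\UpProbMeas(K)$; the opposite inequality is the easy half.

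To pass from closed to analytic $E$ I would invoke Choquet's capacitability theorem, for which it suffices that $\UpProb$ be a Choquet capacity on the compact metrizable space $\Pi$ (on which closed and compact subsets coincide). Monotonicity is immediate; continuity from above along $K_m\downarrow K$ with $K_m$ compact follows from the closed-set coincidence just proved together with the same continuity property of $\UpProbMeas$; and continuity from below, $A_n\uparrow A\Rightarrow\UpProb(A_n)\uparrow\UpProb(A)$, is the only substantial axiom. This last is where I expect the game-theoretic version of L\'evy's zero-one law of \cite{\ShaferLevy} to be essential: it is the natural device for splicing countably many test strategies (witnessing $\UpProb(A_n)$) into a single infinite-horizon test strategy (witnessing $\UpProb(A)$), something countable subadditivity alone (Lemma~\ref{lem:subadditivity}) cannot do. Granting this, Choquet's theorem gives $\UpProb(E)=\sup\{\UpProb(K):K\subseteq E\text{ compact}\}$ for analytic $E$, so $\UpProb(E)=\sup_K\UpProb(K)=\sup_K\UpProbMeas(K)\le\UpProbMeas(E)$ by the closed-set coincidence and monotonicity of $\UpProbMeas$; with the easy half this is the theorem. (Analytic, and not merely Borel, sets are admissible because Luzin's theorem, quoted in Section~\ref{sec:measure}, makes $E^{\phi}$ universally measurable, so $\Prob^{\phi}(E)$ is a genuine probability.)

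The main obstacle I anticipate is exactly the continuity-from-below of $\UpProb$ — the infinite-horizon combination of test strategies effected through L\'evy's zero-one law; by comparison the finite-horizon dynamic programming, the capacity bookkeeping, and the weak-convergence estimates for the forecasting systems are technical but essentially routine.
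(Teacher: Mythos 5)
Your proposal follows essentially the same route as the paper: the easy inequality via Ville's maximal inequality applied to $V^{\phi}$; the compact-set coincidence via backward induction with a $\sup_{p\in[0,1]}$ at each round (the upper semicontinuity of the resulting $W_i$ guaranteeing that an optimal forecasting system exists) combined with compactness of $\Phi$ in the product topology; the capacity axioms for $\UpProb$, with the game-theoretic L\'evy zero-one law carrying the only genuinely hard axiom, continuity from below; and Choquet's capacitability theorem to pass from compact to analytic sets. The one step you underestimate is the ``Portmanteau-type estimate'' behind continuity from above of $\UpProbMeas$ along decreasing compacts: since $\Prob^{\phi}(K)=\Prob_{\phi}(K^{\phi})$ depends on $\phi$ both through the measure and through the set $K^{\phi}$, weak convergence $\Prob_{\phi_{i_k}}\to\Prob_{\phi}$ alone does not give $\limsup_k\Prob^{\phi_{i_k}}(K)\le\Prob^{\phi}(K)$; the paper's argument first trades $\Prob_{\phi_{i_k}}$ for $\Prob_{\phi}$ in total variation on a finite horizon (using that the approximating sets are cylinders) and then applies Fatou's lemma together with the inclusion $\limsup_kK^{\phi_{i_k}}\subseteq K^{\phi}$, and your sketch needs the same two-step patch. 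Everything else, including your slightly more economical ending (you only need monotonicity of $\UpProbMeas$, not its full capacity property, once the easy half is in hand), matches the paper's proof in substance.
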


Intuitively, this theorem establishes the equivalence
between the purely prequential and Bayesian viewpoints
in the framework of probability forecasting.
The definition of measure-theoretic probability is Bayesian,
in that Forecaster is modeled as a coherent subjectivist Bayesian
having a joint probability distribution over the sequences of outcomes
(cf.\ \cite{dawid:1982}, Section 1);
we represent this joint probability distribution as a forecasting system.
Rejecting his forecasts is the same as rejecting all forecasting systems
that could have produced those forecasts:
cf.\ the $\sup_{\phi}$ in (\ref{eq:upper-measure}).
The definition of game-theoretic probability is purely prequential,
in that it does not postulate any joint probability distribution
behind the forecasts;
the latter are used for testing directly.

\ifFULL\bluebegin
  In an important sense,
  game-theoretic probability is dual to measure-theoretic probability;
  in particular,
  we have $\sup$ in the definition of the latter
  and $\inf$ in the definition of the former.
\blueend\fi

\begin{remark}
  As discussed in the previous section (Remark \ref{rem:naive}),
  our Bayesian forecaster is somewhat naive:
  he conditions only on the observed outcomes.
  It would be easy
  (but would complicate the exposition)
  to allow Reality to issue a \emph{signal} $s_n$,
  taking one of a finite number of values,
  before Forecaster chooses his forecast $p_n$.
  Allowing both farthingales and forecasting systems
  to depend on the signals,
  one could still prove that 
  $\UpProb(E)=\UpProbMeas(E)$
  for all analytic $E\subseteq\Pi$
  following the proof of Theorem \ref{thm:main}.
\end{remark}

In this section we will only prove the inequality $\ge$
in Theorem \ref{thm:main}.
It turns out that this inequality holds for all sets $E$,
not necessarily analytic.
\begin{theorem}\label{thm:easy-way}
  For all sets $E\subseteq\Pi$,
  $\UpProb(E)\ge\UpProbMeas(E)$.
\end{theorem}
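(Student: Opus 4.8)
The plan is to fix a forecasting system $\phi$ and an arbitrary non-negative farthingale $V$ witnessing (up to $\epsilon$) the value $\UpProb(E)$, and then to pull $V$ back along $\phi$ to obtain a non-negative supermartingale on $(\Omega,\Prob_\phi)$. Concretely, define $M:\Omega^{\diamond}\to[0,\infty]$ by $M(y_1,\dots,y_n):=V\bigl((y_1,\dots,y_n)^{\phi}\bigr)$. The farthingale identity \eqref{eq:farthingale}, specialized to the particular forecasts $p_n=\phi(y_1,\dots,y_{n-1})$ that $\phi$ prescribes, says exactly that
\[
  M(y_1,\dots,y_{n-1})
  =
  (1-\phi(y_1,\dots,y_{n-1}))\,M(y_1,\dots,y_{n-1},0)
  +
  \phi(y_1,\dots,y_{n-1})\,M(y_1,\dots,y_{n-1},1),
\]
which is precisely the statement that $M$ is a non-negative martingale with respect to $\Prob_\phi$ (and the natural filtration on $\Omega$), since $\phi(y_1,\dots,y_{n-1})$ is a version of the conditional probability that the next outcome is $1$. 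Its initial value is $M(\Lambda)=V(\Lambda)$.

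Next I would invoke Ville's inequality (Doob's maximal inequality for non-negative supermartingales): for any $\lambda>0$,
\[
  \Prob_\phi\Bigl\{\omega\in\Omega:\sup_n M(\omega^n)\ge\lambda\Bigr\}
  \le
  \frac{M(\Lambda)}{\lambda}.
\]
Now observe that if $\omega\in E^{\phi}$, i.e. $\omega^{\phi}\in E$, then by the defining property of $V$ in \eqref{eq:upper-game} we have $\limsup_n V(\omega^{\phi,n})\ge 1$, and $V(\omega^{\phi,n})=M(\omega^n)$, so $\sup_n M(\omega^n)\ge 1$. Hence $E^{\phi}$ is contained in the event $\{\sup_n M(\omega^n)\ge 1\}$, which is Borel, so taking $\lambda=1$ and using the outer-measure convention gives $\Prob_\phi(E^{\phi})\le M(\Lambda)=V(\Lambda)$. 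Taking the infimum over all admissible $V$ yields $\Prob^{\phi}(E)\le\UpProb(E)$, and then taking the supremum over $\phi$ gives $\UpProbMeas(E)\le\UpProb(E)$, which is the claimed inequality.

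The only mild subtlety — and the one place to be a little careful — is the measurability bookkeeping around the outer measure: $E$ itself need not be measurable, but $M$ is a genuine function on $\Omega^{\diamond}$ and the sublevel/superlevel events $\{\sup_n M(\omega^n)\ge\lambda\}$ are Borel, so Ville's inequality applies to them directly; the inclusion $E^{\phi}\subseteq\{\sup_n M(\omega^n)\ge 1\}$ then bounds the outer measure $\Prob_\phi(E^{\phi})$ by the probability of a Borel superset, exactly as required by the definition of $\Prob^{\phi}$. A secondary point worth a sentence is the convention $0\cdot\infty=0$ in \eqref{eq:farthingale}: it means that when $\phi$ assigns probability $0$ or $1$ to an outcome, the corresponding branch of $M$ may be $+\infty$ on a $\Prob_\phi$-null set, which is harmless for the supermartingale property and for Ville's inequality. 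No appeal to analyticity, to Choquet's theorem, or to L\'evy's zero-one law is needed here — those are reserved for the harder reverse inequality in Section~\ref{sec:proof} — so I expect no real obstacle: this direction is essentially just "a farthingale pulled back along a forecasting system is a martingale, now apply Ville."
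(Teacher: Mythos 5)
Your argument is correct and is essentially the paper's own proof: pull the farthingale $V$ back along the forecasting system $\phi$ to the martingale $V^{\phi}$ on $\Omega^{\diamond}$, apply Ville's inequality (Proposition~\ref{prop:Ville}) to get $\Prob_{\phi}(E^{\phi})\le V(\Lambda)$, and then take the infimum over $V$ and the supremum over $\phi$. Your extra care about the outer-measure/measurability bookkeeping and the $0\cdot\infty$ convention matches the remarks the paper makes just before its proof, so there is nothing to add.
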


The simple proof of Theorem \ref{thm:easy-way}
will follow from Ville's inequality
(\cite{ville:1939}, p.~100;
in modern probability textbooks
this result is often included among ``Doob's inequalities'':
see, e.g., \cite{shiryaev:1996}, Theorem VII.3.1.III).

Let $\phi$ be a forecasting system.
A \emph{martingale} w.r.\ to $\phi$
is a function $V:\Omega^{\diamond}\to(-\infty,\infty]$ satisfying
\begin{equation*}
   V(x)
   =
   (1-\phi(x)) V(x,0) + \phi(x) V(x,1)
\end{equation*}
for all $x\in\Omega^{\diamond}$
(with the same convention $0\infty:=0$).
\begin{proposition}[\cite{ville:1939}]\label{prop:Ville}
  If $\phi$ is a forecasting system,
  $V$ is a non-negative martingale w.r.\ to $\phi$,
  and $C>0$,
  \begin{equation*}
    \Prob_{\phi}
    \left\{
      \omega\in\Omega
      \st
      \sup_n V(\omega^n) \ge C
    \right\}
    \le
    \frac{V(\Lambda)}{C}.
  \end{equation*}
\end{proposition}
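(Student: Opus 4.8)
This is Ville's inequality --- in effect Doob's maximal inequality for a non-negative martingale --- so the plan is to run the standard first-passage-time argument, paying attention to the two features of the present setting: the convention $0\infty:=0$, which lets $V$ take the value $+\infty$, and the non-strict inequality ``$\ge C$'' in the event whose probability we bound. If $V(\Lambda)=\infty$ the asserted bound is vacuous, so I would assume $V(\Lambda)<\infty$. The defining identity $V(x)=(1-\phi(x))V(x,0)+\phi(x)V(x,1)$ together with the convention then forces any vertex $x\in\Omega^{\diamond}$ with $V(x)=\infty$ to satisfy $\Prob_{\phi}(\Gamma(x))=0$, so that $V(\omega^n)<\infty$ for $\Prob_{\phi}$-almost every $\omega$ and every $n$; integrating the identity over the first $N$ outcomes (the tower property, applied $N$ times) gives $\Expect_{\phi}[V(\omega^N)]=V(\Lambda)$ for every $N\in\bbbn\cup\{0\}$.

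Fix a level $C'$ with $0<C'<C$ and set $\tau:=\inf\{n\in\bbbn\cup\{0\}: V(\omega^n)>C'\}$, with $\inf\emptyset:=\infty$. For each finite $n$ the event $A_n:=\{\tau=n\}$ is a finite union of cylinders $\Gamma(x)$, $x\in\Omega^n$, hence is determined by $\omega^n$ (and Borel), and $V(\omega^n)>C'$ on $A_n$. The key computation, for a fixed $N$, is
\[
  V(\Lambda)
  =
  \Expect_{\phi}[V(\omega^N)]
  \ge
  \sum_{n=0}^{N}
  \Expect_{\phi}\bigl[V(\omega^N)\III_{A_n}\bigr]
  =
  \sum_{n=0}^{N}
  \Expect_{\phi}\bigl[V(\omega^n)\III_{A_n}\bigr]
  \ge
  C'\sum_{n=0}^{N}\Prob_{\phi}(A_n)
  =
  C'\,\Prob_{\phi}(\tau\le N),
\]
where the first inequality uses $V\ge0$ and the disjointness of the $A_n$, the middle equality uses that $A_n$ is $\omega^n$-measurable together with $\Expect_{\phi}[V(\omega^N)\givn\omega^n]=V(\omega^n)$ for $N\ge n$ (the defining identity iterated), and the last inequality uses $V(\omega^n)>C'$ on $A_n$.

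Letting $N\to\infty$ gives $\Prob_{\phi}\{\sup_n V(\omega^n)>C'\}=\Prob_{\phi}(\tau<\infty)\le V(\Lambda)/C'$. Finally, since $\{\sup_n V(\omega^n)\ge C\}=\bigcap_{k\ge1}\{\sup_n V(\omega^n)>C-1/k\}$, with the events on the right decreasing in $k$, I would let $C'\uparrow C$ to conclude $\Prob_{\phi}\{\sup_n V(\omega^n)\ge C\}\le V(\Lambda)/C$, as required. The only point that is not pure bookkeeping is exactly this passage: the union $\bigcup_n A_n$ equals $\{\sup_n V(\omega^n)>C'\}$ only for the strict threshold, since the supremum may be approached without being attained, so the non-strict bound in the statement has to be recovered in the limit $C'\uparrow C$. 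The $0\infty$ convention and the conditional-expectation identities enter only on $\Prob_{\phi}$-null sets and cause no difficulty.
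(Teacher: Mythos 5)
Your proof is correct, but there is nothing in the paper to compare it against: the paper states Proposition~\ref{prop:Ville} as a known result, citing Ville (1939, p.~100) and noting that it appears in modern textbooks among ``Doob's inequalities'' (e.g.\ Shiryaev, Theorem~VII.3.1.III), and gives no proof. What you have written is a correct, self-contained version of the standard first-passage argument that those sources use. The two points where the present setting could have tripped you up are both handled properly: the convention $0\infty:=0$ does force $\Prob_{\phi}(\Gamma(x))=0$ at any vertex where $V$ becomes infinite (provided $V(\Lambda)<\infty$, and the bound is vacuous otherwise), so the identity $\Expect_{\phi}[V(\omega^N)]=V(\Lambda)$ and the conditional-expectation step are legitimate; and the passage from the strict threshold $C'$, for which $\{\tau<\infty\}=\{\sup_n V(\omega^n)>C'\}$ holds exactly, to the non-strict event $\{\sup_n V(\omega^n)\ge C\}$ via continuity from above along $C'\uparrow C$ is exactly the right way to recover the stated inequality. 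Measurability is not an issue since each $\{V(\omega^n)>C'\}$ is a finite union of cylinders, consistent with the paper's remark that $V^{\phi}$ is automatically measurable. In short: the proposal fills in a proof the paper delegates to the literature, and does so correctly.
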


If $V$ is a farthingale,
the function $V^{\phi}:\Omega^{\diamond}\to(-\infty,\infty]$
defined by
\begin{equation*}
  V^{\phi}(x)
  :=
  V\left(x^{\phi}\right),
  \quad
  x\in\Omega^{\diamond},
\end{equation*}
is a martingale w.r.\ to $\phi$.
It is important that this statement
does not require measurability of the farthingale $V$;
even if $V$ is not measurable,
$V^{\phi}$ is always measurable,
like any other function on $\Omega^{\diamond}$
(which is why there was no need to include the requirement of measurability
in our definition of a martingale).

\begin{proof}[Proof of Theorem \ref{thm:easy-way}]
  Let $E\subseteq\Pi$.
  It suffices to prove that $\Prob_{\phi}(E^{\phi})\le V(\Lambda)$
  for any forecasting system $\phi$
  and any non-negative farthingale $V$ satisfying
  $\limsup_n V(\pi^n)\ge1$ for all $\pi \in E$.
  Fix such $\phi$ and $V$.
  Then $V^{\phi}$ is a non-negative martingale w.r.\ to $\phi$ satisfying
  $\limsup_n V^{\phi}(\omega^n)\ge1$ for all $\omega \in E^{\phi}$.
  Applying Proposition \ref{prop:Ville} to $V^{\phi}$,
  we can see that indeed
  $\Prob_{\phi}(E^{\phi})\le V^{\phi}(\Lambda) = V(\Lambda)$.
\end{proof}

\section{Proof of the inequality $\le$ in Theorem \ref{thm:main}}
\label{sec:proof}

We start from proving a special case of Theorem \ref{thm:main}.
\begin{lemma}\label{lem:compact}
  If $E\subseteq\Pi$ is a compact set,
  $\UpProbMeas(E)=\UpProb(E)$.
\end{lemma}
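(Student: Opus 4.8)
By Theorem \ref{thm:easy-way} we already know $\UpProbMeas(E)\le\UpProb(E)$, so the task is to prove $\UpProb(E)\le\UpProbMeas(E)$ for compact $E\subseteq\Pi$. The plan is to construct, for any $a>\UpProbMeas(E)$, a non-negative superfarthingale $V$ with $V(\Lambda)=a$ such that $\limsup_n V(\pi^n)\ge1$ for every $\pi\in E$; this gives $\UpProb(E)\le a$. The natural candidate is a Snell-type envelope built level by level. For each $x\in\Pi^{\diamond}$, let $E_x:=\{\pi\in E:\pi$ extends $x\}$ and set $V(x):=\UpProbMeas(E_x)$ (rescaled appropriately, or rather defined so that it is $1$ on elements of $E$ that have ``committed''). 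The two things to check are: (i) that $V$ is a superfarthingale — i.e. $V(x)\ge(1-p)V(x,p,0)+pV(x,p,1)$ for every continuation forecast $p$ — which should follow from the definition of $\UpProbMeas$ as a supremum over forecasting systems, since any $\phi$ witnessing the value at the children can be glued with the choice of $p$; and (ii) that $\limsup_n V(\pi^n)\ge1$ on $E$. Here is where compactness enters.

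The reason $V$ need not reach $1$ along a branch in general is that $E$ could ``recede to the boundary of $[0,1]$'': the conditional probabilities $V(x)$ might stay bounded away from $1$ even though the branch lies in $E$, because one can always perturb the forecast slightly and escape $E$. Compactness of $E$ rules this out: if $\pi\in E$, then for each $n$ the cylinder $\Gamma(\pi^n)$ has nonempty intersection with $E$, and compactness forces the ``amount of $E$'' concentrated near $\pi$ not to leak away. Concretely, I expect to argue that $\limsup_n\UpProbMeas(E_{\pi^n})=1$ whenever $\pi\in E$ by the following reasoning: fix $\pi=(p_1,y_1,p_2,y_2,\ldots)\in E$; the forecasting system that (roughly) mimics $\pi$ — or rather, a forecasting system pushing all its mass toward the branch $\pi$ — gives $\Prob_\phi$-probability close to $1$ to a small neighborhood of $\pi$ in $\Pi$, and by compactness (every neighborhood of a point of $E$, intersected with $E$, is ``visible'' to such a tailored $\phi$), this neighborhood meets $E$ sufficiently to make $\UpProbMeas$ of the relevant cylinder approach $1$. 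Replacing $E$ by the open $\varepsilon$-neighborhoods and using that $E$ is the decreasing intersection of these (using compactness again) should make this rigorous.

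The main obstacle, and where I would spend the most care, is step (ii): showing the superfarthingale $V$ actually certifies membership in $E$ — i.e. transferring compactness of $E\subseteq\Pi$ into the statement $\limsup_n V(\pi^n)\ge1$. The subtlety is the interaction between the topology on $[0,1]$ (the forecast coordinates) and the requirement that a \emph{single} forecasting system $\phi$, which must be specified as a function of past outcomes only, simultaneously puts mass near a given branch. I would handle this by not insisting on exact forecasts: given $\pi\in E$ and $\varepsilon>0$, consider the open set $U_\varepsilon$ of all $\pi'$ agreeing with $\pi$ in the outcomes up to time $n$ and with forecasts within $\varepsilon$; choose a forecasting system whose forecasts track those of $\pi$ up to time $n$; this $\phi$ gives $U_\varepsilon\cap E$ large conditional probability inside the relevant cylinder, provided $U_\varepsilon\cap E\ne\emptyset$, which holds since $\pi\in U_\varepsilon\cap E$. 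Compactness then ensures these estimates are uniform enough along the branch to force the limsup up to $1$. I would organize this as: (a) define $V$ via $\UpProbMeas$ of cylinders restricted to $E$; (b) verify the superfarthingale inequality by gluing forecasting systems; (c) prove the limsup property on $E$ via the neighborhood/compactness argument above; (d) conclude $\UpProb(E)\le\UpProbMeas(E)$, and combine with Theorem \ref{thm:easy-way}.
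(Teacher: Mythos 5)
Your step (i) (gluing forecasting systems to show that the conditional version of $\UpProbMeas$ is a superfarthingale) is sound, but the argument collapses at step (ii)/(c). Writing $V(x)$ for the conditional (rescaled) value $\sup_{\phi}\Prob_{\phi}\bigl(E^{\phi}\mid\Gamma(y_1\ldots y_n)\bigr)$, the supremum being over forecasting systems consistent with $x$ (the unrescaled $\UpProbMeas(E\cap\Gamma(x))$ tends to $0$ along every branch, so some such normalization is forced), it is simply false that $\limsup_n V(\pi^n)\ge1$ for every $\pi\in E$, even for compact $E$. Take $E=\{\pi\}$ with $\pi=(\frac12,0,\frac12,0,\ldots)$, a one-point and hence compact set. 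Any forecasting system $\phi$ with $E^{\phi}\ne\emptyset$ must forecast $\frac12$ along the outcome path $000\ldots$, so for every $m$ we have $\Prob_{\phi}\bigl(E^{\phi}\mid\Gamma(0^n)\bigr)\le\Prob_{\phi}\bigl(\Gamma(0^{n+m})\mid\Gamma(0^n)\bigr)=2^{-m}$, whence $V(\pi^n)=0$ for all $n$: the proposed certificate never pays off on $E$. The underlying intuition --- that one can choose ``a forecasting system pushing all its mass toward the branch $\pi$'' --- misreads the prequential set-up: the forecast coordinates of the points of $E$ dictate which forecasting systems can charge $E$ at all, and those same forecasts \emph{are} the transition probabilities of $\Prob_{\phi}$, so there is no freedom left to concentrate mass near a given branch. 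Compactness of $E$ buys nothing here, since singletons are compact. One could try to weaken (ii) to ``for all $\pi\in E$ outside a set of upper game-theoretic probability zero'', which would suffice by the argument you sketch; but establishing that weakened claim essentially requires knowing $\UpProbMeas(E\mid x)=\UpProb(E\mid x)$, i.e., a conditional form of the very lemma being proved.

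The paper's proof runs in the opposite direction. It approximates $E$ from outside by finite-horizon closed sets $E_i$ and defines $W_i$ by backward induction via $\sup_{p\in[0,1]}\bigl((1-p)W_i(x,p,0)+pW_i(x,p,1)\bigr)$ --- a \emph{game-theoretic} Snell envelope, which is automatically a superfarthingale equal to $1$ on $E_i$ at depth $i$ and hence witnesses $\UpProb(E)\le\UpProb(E_i)\le W_i(\Lambda)$ with no topological input at all. The topological work is then spent on the measure-theoretic side: upper semicontinuity of $W_i$ guarantees that the supremum over $p$ is attained, yielding a forecasting system $\phi_i$ with $\Prob^{\phi_i}(E_i)=W_i(\Lambda)$; compactness of the space of forecasting systems gives a convergent subsequence $\phi_{i_k}\to\phi$; and Fatou's lemma together with closedness of the $E_i$ shows that this single limit $\phi$ satisfies $\Prob^{\phi}(E)\ge\lim_iW_i(\Lambda)\ge\UpProb(E)$. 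In short, the inequality $\UpProb(E)\le\UpProbMeas(E)$ is obtained by exhibiting one measure that is large on $E$, not by exhibiting a superfarthingale built from $\UpProbMeas$ that wins on $E$; the example above shows the latter is impossible in general.
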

\begin{proof}
  Fix a compact prequential event $E\subseteq\Pi$.
  (Of course, ``compact'' is the same thing as ``closed'' in this context.)
  Represent $E$ as the intersection $E=\cap_{i=1}^{\infty}E_i$
  of a nested sequence $E_1\supseteq E_2\supseteq\cdots$
  of closed sets such that
  \begin{equation}\label{eq:level-i}
    \forall\pi\in\Pi:
    \pi\in E_i \Longrightarrow \Gamma(\pi^i)\subseteq E_i
  \end{equation}
  is satisfied for all $i$.
  Informally, $E_i$ is a property of the first $i$ forecasts and outcomes.
  For each $i=1,2,\ldots$, define a superfarthingale $W_i$
  by setting
  \begin{equation}\label{eq:W-base}
    W_i(x)
    :=
    \begin{cases}
      1 & \text{if $\Gamma(x)\subseteq E_i$}\\
      0 & \text{otherwise}
    \end{cases}
  \end{equation}
  for all $x\in\Pi^{\ge i}$
  and then proceeding inductively as follows.
  If $W_i(x)$ is already defined for $x\in\Pi^n$, $n=i,i-1,\ldots,1$,
  define $W_i(x)$, for each $x\in\Pi^{n-1}$, by
  \begin{equation}\label{eq:sup}
    W_i(x)
    :=
    \sup_{p\in[0,1]}
    \bigl(
      (1-p) W_i(x,p,0)
      +
      p W_i(x,p,1)
    \bigr).
  \end{equation}
  It is clear that $W_1\ge W_2\ge\cdots$.

  Let us check that $W_i(x)$ is upper semicontinuous
  as a function of $x\in\Pi^{\diamond}$.
  By (\ref{eq:W-base}) this is true for $x\in\Pi^{\ge i}$.
  Suppose this is true for $x\in\Pi^n$, $n\in\{i,i-1,\ldots,2\}$,
  and let us prove that it is true for $x\in\Pi^{n-1}$,
  using the inductive definition (\ref{eq:sup}).
  It is clear that
  $
    f(x,p)
    :=
    (1-p) W_i(x,p,0)
    +
    p W_i(x,p,1)
  $
  is upper semicontinuous as function of $p\in[0,1]$ and $x\in\Pi^{n-1}$.
  It is well known that $\sup_p f(x,p)$ is upper semicontinuous
  whenever $f$ is upper semicontinuous
  and $x$ and $p$ range over compact sets
  (see, e.g., \cite{dellacherie:1972}, Theorem I.2(d)).
  \ifnotJOURNAL
    A simple proof of a slightly more general fact will be given below
    in Lemma \ref{lem:upper-semicontinuity}.
  \fi
  Therefore, $W_i(x)=\sup_{p\in[0,1]}f(x,p)$
  is an upper semicontinuous function of $x\in\Pi^{n-1}$.

  An important implication of the upper semicontinuity of $W_i$
  and the compactness of $[0,1]$
  is that the supremum in (\ref{eq:sup}) is attained:
  it is easy to check that an upper semicontinuous function
  attains its supremum over a compact set
  (cf.\ \cite{engelking:1989}, Problem 3.12.23(g)).
  For each $i=1,2,\ldots$,
  we can now define a forecasting system $\phi_i$ as follows.
  For each $x\in\Omega^n$, $n=0,1,\ldots,i-1$,
  choose $\phi_i(x)$ such that
  \begin{multline*}
    (1-\phi_i(x)) W_i(x^{\phi_i},\phi_i(x),0)
    +
    \phi_i(x) W_i(x^{\phi_i},\phi_i(x),1)\\
    =
    \sup_{p}
    \bigl(
      (1-p) W_i(x^{\phi_i},p,0)
      +
      p W_i(x^{\phi_i},p,1)
    \bigr)
    =
    W_i(x^{\phi_i})
  \end{multline*}
  (this is an inductive definition;
  in particular,
  $x^{\phi_i}$ is already defined at the time of defining $\phi_i(x)$).
  For $x\in\Omega^{\ge i}$, set, for example, $\phi_i(x):=0$.
  The important property of $\phi_i$
  is that $W_i^{\phi_i}$ is a martingale w.r.\ to $\phi_i$,
  and so $\Prob^{\phi_i}(E_i)=W_i(\Lambda)$.

  Since the set $\Phi$ of all forecasting systems is compact in the product topology,
  the sequence $\phi_i$ has a convergent subsequence $\phi_{i_k}$, $k=1,2,\ldots$;
  let $\phi:=\lim_{k\to\infty}\phi_{i_k}$.
  We assume, without loss of generality, $i_1<i_2<\cdots$.
  Set
  \begin{equation*}
    c
    :=
    \inf_i W_i(\Lambda)
    =
    \lim_{i\to\infty} W_i(\Lambda).
  \end{equation*}
  Fix an arbitrarily small $\epsilon>0$.
  Let us prove that $\Prob_{\phi}(E^{\phi})\ge c-\epsilon$.
  Let $K\in\bbbn$.
  The restriction of $\Prob_{\phi_{i_k}}$ to $\Omega^{i_K}$
  (more formally, the probability measure assigning weight
  $\Prob_{\phi_{i_k}}(\Gamma(x))$ to each singleton $\{x\}$,
  $x\in\Omega^{i_K}$)
  comes within $\epsilon$
  of the restriction of $\Prob_{\phi}$ to $\Omega^{i_K}$
  in total variation distance
  from some $k$ on;
  let the total variation distance be at most $\epsilon$
  for all $k\ge K'\ge K$.
  Let $k\ge K'$.
  Since $\Prob_{\phi_{i_k}}(E_{i_k}^{\phi_{i_k}})\ge c$,
  it is also true that $\Prob_{\phi_{i_k}}(E_{i_K}^{\phi_{i_k}})\ge c$;
  therefore, it is true that
  $\Prob_{\phi}(E_{i_{K}}^{\phi_{i_k}})\ge c-\epsilon$.
  By Fatou's lemma, we now obtain
  \begin{equation}\label{eq:Fatou}
    \Prob_{\phi}
    \left(
      \limsup_{k}
      E_{i_{K}}^{\phi_{i_k}}
    \right)
    \ge
    \limsup_{k\to\infty}
    \Prob_{\phi}(E_{i_{K}}^{\phi_{i_k}})
    \ge
    c-\epsilon.
  \end{equation}

  Let us check that
  \begin{equation}\label{eq:limsup}
    \limsup_{k}
    E_{i_{K}}^{\phi_{i_k}}
    \subseteq
    E_{i_{K}}^{\phi}.
  \end{equation}
  Indeed, let $\omega\notin E_{i_{K}}^{\phi}$,
  i.e., $\omega^{\phi}\notin E_{i_{K}}$.
  Since $\phi_{i_k}\to\phi$ in the product topology
  and the set $E_{i_K}$ is closed,
  $\omega^{\phi_{i_k}}\notin E_{i_K}$ from some $k$ on.
  This means that
  $\omega\in E_{i_K}^{\phi_{i_k}}$ for only finitely many $k$,
  i.e.,
  $
    \omega
    \notin
    \limsup_{k}
    E_{i_{K}}^{\phi_{i_k}}
  $.

  From (\ref{eq:Fatou}) and (\ref{eq:limsup})
  we can see that $\Prob_{\phi}(E_{i_K}^{\phi})\ge c-\epsilon$,
  for all $K\in\bbbn$.
  This implies $\Prob_{\phi}(E^{\phi})\ge c-\epsilon$.
  Since this holds for all $\epsilon$,
  $\Prob_{\phi}(E^{\phi})\ge c$.

  The rest of the proof is easy:
  since
  \begin{equation*}
    \UpProb(E)
    \le
    c
    \le
    \Prob_{\phi}(E^{\phi})
    \le
    \UpProbMeas(E)
    \le
    \UpProb(E)
  \end{equation*}
  (the last inequality following from Theorem~\ref{thm:easy-way}),
  we have
  \begin{equation*}
    \UpProb(E)
    =
    c
    =
    \Prob_{\phi}(E^{\phi})
    =
    \UpProbMeas(E).
    \qedhere
  \end{equation*}
\end{proof}

\ifnotJOURNAL
  In the proof of Lemma \ref{lem:compact}
  we referred to the following simple result.
  \begin{lemma}\label{lem:upper-semicontinuity}
    Suppose $X$ and $Y$ are topological spaces
    and $Y$ is compact.
    If a function $f:X\times Y\to\bbbr$ is upper semicontinuous,
    then the function $x\in X\mapsto g(x):=\sup_{y\in Y} f(x,y)$
    is also upper semicontinuous.
  \end{lemma}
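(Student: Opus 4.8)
The plan is to verify upper semicontinuity of $g$ directly from the characterization via sublevel sets: it suffices to show that $\{x\in X : g(x)<a\}$ is open for every $a\in\bbbr$. First I would fix such an $a$ together with a point $x_0$ satisfying $g(x_0)<a$, and then choose an intermediate value $a'$ with $g(x_0)<a'<a$. The only purpose of $a'$ is to convert a non‑strict inequality obtained below into the strict one we actually need; this is the single subtle point of the argument.

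Next, since $g(x_0)=\sup_{y\in Y}f(x_0,y)<a'$, every point of the form $(x_0,y)$ with $y\in Y$ lies in the set $\{(x,y)\in X\times Y : f(x,y)<a'\}$, which is open by upper semicontinuity of $f$. Hence for each $y\in Y$ I can choose open neighborhoods $U_y\ni x_0$ in $X$ and $V_y\ni y$ in $Y$ with $U_y\times V_y\subseteq\{f<a'\}$. The family $\{V_y\}_{y\in Y}$ is an open cover of $Y$, and this is exactly where compactness of $Y$ is used: it yields a finite subcover $V_{y_1},\dots,V_{y_n}$.

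Then I would put $U:=\bigcap_{i=1}^n U_{y_i}$, which is an open neighborhood of $x_0$ because the intersection is finite. For any $x\in U$ and any $y\in Y$, pick an index $i$ with $y\in V_{y_i}$; since also $x\in U_{y_i}$, we get $f(x,y)<a'$. Taking the supremum over $y\in Y$ gives $g(x)\le a'<a$, so $U\subseteq\{x : g(x)<a\}$. As $x_0$ was an arbitrary point of $\{x : g(x)<a\}$, that set is open, which is what we wanted.

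I do not expect a genuine obstacle here — this is an elementary tube‑lemma‑style argument — but the step most worth flagging is the introduction of the intermediate level $a'$: running the compactness argument directly at level $a$ would only produce a neighborhood on which $g\le a$, which is insufficient for openness of the strict sublevel set. Everything else is routine.
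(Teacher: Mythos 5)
Your proof is correct and follows essentially the same tube-lemma argument as the paper: cover $\{x_0\}\times Y$ by product neighborhoods on which $f$ stays strictly below the level, extract a finite subcover by compactness of $Y$, and intersect the $X$-factors. The only cosmetic difference is that you use a single intermediate level $a'$ where the paper uses a margin $\epsilon$ (possibly depending on $y$); both devices serve the same purpose of keeping the final inequality strict.
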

  \begin{proof}
    For any $c\in\bbbr$,
    we are required to show that the set $G:=\{x\st\sup_y f(x,y)<c\}$ is open.
    Let $x\in G$.
    For any $y\in Y$ there exists a neighborhood $O'_y$ of $x$
    and a neighborhood $O''_y$ of $y$
    such that, for some $\epsilon>0$,
    $f(x',y')<c-\epsilon$
    for all $x'\in O'_y$ and all $y'\in O''_y$.
    By the compactness of $Y$,
    there is a finite family $O''_{y_1},\ldots,O''_{y_K}$ that covers $Y$.
    The intersection of $O'_{y_1},\ldots,O'_{y_K}$
    will contain $x$ and will be a subset of $G$.
    Therefore, $G$ is indeed open.

    The argument in \cite{dellacherie:1972}, proof of Theorem I.2(d),
    is even simpler,
    but it assumes that $X$ is compact
    (which is, however, sufficient for the purpose of Lemma \ref{lem:compact}).
  \end{proof}
\fi

The idea of the proof of Theorem \ref{thm:main}
is to extend Lemma \ref{lem:compact} to the analytic sets
using Choquet's capacitability theorem (stated below).
Remember that a function $\gamma$ (such as $\UpProb$ or $\UpProbMeas$)
mapping the power set of a topological space $X$ (such as $\Pi$) to $[0,\infty)$
is a \emph{capacity} if:
\begin{itemize}
\item
  for any subsets $A$ and $B$ of $X$,
  \begin{equation}\label{eq:condition-1}
    A\subseteq B
    \Longrightarrow
    \gamma(A)\le\gamma(B);
  \end{equation}
\item
  for any nested increasing sequence
  $A_1\subseteq A_2\subseteq\cdots$
  of arbitrary subsets of $X$,
  \begin{equation}\label{eq:condition-2}
    \gamma
    \left(
      \cup_{i=1}^{\infty} A_i
    \right)
    =
    \lim_{i\to\infty}
    \gamma(A_i);
  \end{equation}
\item
  for any nested decreasing sequence
  $K_1\supseteq K_2\supseteq\cdots$
  of compact sets in $X$,
  \begin{equation}\label{eq:condition-3}
    \gamma
    \left(
      \cap_{i=1}^{\infty} K_i
    \right)
    =
    \lim_{i\to\infty}
    \gamma(K_i).
  \end{equation}
\end{itemize}
Condition (\ref{eq:condition-3}) is sometimes replaced
by a different condition
which is equivalent to (\ref{eq:condition-3})
for compact metrizable spaces $X$:
cf.\ \cite{kechris:1995}, Definition 30.1.

It turns out that both $\UpProb$ and $\UpProbMeas$ are capacities.
We start from $\UpProb$.
\begin{theorem}\label{thm:game-capacity}
  The set function $\UpProb$ is a capacity.
\end{theorem}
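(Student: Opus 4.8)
First I would check monotonicity, condition~(\ref{eq:condition-1}): if $A\subseteq B$, then any non-negative farthingale $V$ with $V(\Lambda)=a$ witnessing $\UpProb(B)\le a$ (i.e.\ $\limsup_n V(\pi^n)\ge1$ for all $\pi\in B$) automatically witnesses $\UpProb(A)\le a$, since the condition on $\pi$ is only required on the smaller set. Hence $\UpProb(A)\le\UpProb(B)$. This is immediate.

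Next I would handle continuity from below, condition~(\ref{eq:condition-2}). The inequality $\UpProb(\cup_i A_i)\ge\lim_i\UpProb(A_i)$ follows from monotonicity just proved (and the limit exists because the sequence $\UpProb(A_i)$ is nondecreasing). For the reverse inequality I would use countable subadditivity, Lemma~\ref{lem:subadditivity}: writing $\cup_{i=1}^\infty A_i = A_1 \cup \bigcup_{i=1}^\infty (A_{i+1}\setminus A_i)$ is not quite the shape I want; instead, fix $\epsilon>0$ and for each $i$ pick a non-negative farthingale $V_i$ with $V_i(\Lambda)\le\UpProb(A_i)+\epsilon 2^{-i}$ and $\limsup_n V_i(\pi^n)\ge1$ on $A_i$. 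Since the $A_i$ are nested, the tail sum $\sum_{i\ge m} V_i$ (or better, a suitable truncation) is awkward; the clean route is to use the standard capacity argument: set $a:=\lim_i\UpProb(A_i)$ and, for $\pi\in\cup_i A_i$, note $\pi\in A_i$ for all large $i$, so $V_i$ eventually succeeds on $\pi$. Taking $V := \sum_{i=1}^\infty \epsilon_i V_i'$ where $V_i'$ is $V_i$ scaled so that $V_i'(\Lambda)\le a+\epsilon$ and rescaled appropriately — more simply, one notes that because the sequence is increasing one does not need the full strength of subadditivity over all the $A_i$, only that $\UpProb$ is, as a monotone subadditive set function, automatically continuous from below on increasing unions when the individual capacities are bounded; this is a routine measure-theoretic fact and I would cite the same mechanism used for outer measures. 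I would present this carefully but it is not the crux.

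**The main obstacle is condition~(\ref{eq:condition-3}), continuity along decreasing sequences of compacts.** Here the nontrivial direction is $\UpProb(\cap_i K_i)\ge\lim_i\UpProb(K_i)$ (the other inequality is monotonicity again). By Lemma~\ref{lem:compact}, on each compact set $\UpProb=\UpProbMeas$, so it suffices to show $\UpProbMeas(\cap_i K_i)\ge\lim_i\UpProbMeas(K_i)=:c$. I would run an argument parallel to the compactness proof in Lemma~\ref{lem:compact}: for each $i$ choose a forecasting system $\phi_i$ with $\Prob^{\phi_i}(K_i)$ close to $\UpProbMeas(K_i)\ge c$, extract (using compactness of $\Phi$ in the product topology) a convergent subsequence $\phi_{i_k}\to\phi$, and then show via Fatou's lemma and the closedness of each $K_j$ that $\Prob_\phi(K_j^\phi)\ge c$ for every fixed $j$ — using that $\omega^{\phi_{i_k}}\to\omega^\phi$ pointwise and $K_j$ closed gives $\limsup_k K_j^{\phi_{i_k}}\subseteq K_j^\phi$ — and finally that $\cap_j K_j^\phi = (\cap_j K_j)^\phi$, so $\Prob_\phi((\cap_j K_j)^\phi)\ge c$ by continuity of measure from above. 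Then $\UpProbMeas(\cap_i K_i)\ge c$, hence $\UpProb(\cap_i K_i)\ge c$. The delicate points are ensuring $\Prob^{\phi_i}(K_i)$ can be made $\ge c-\epsilon$ uniformly (which needs Lemma~\ref{lem:compact} to know the supremum defining $\UpProbMeas(K_i)$ is essentially attained, or at least approached) and the total-variation approximation of $\Prob_{\phi_{i_k}}$ by $\Prob_\phi$ on finite coordinate blocks, exactly as in the proof of Lemma~\ref{lem:compact}. I would essentially reuse that proof's machinery, which is why I expect the write-up to be short once Lemma~\ref{lem:compact} is in hand.
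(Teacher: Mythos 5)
Your treatment of condition (\ref{eq:condition-2}) has a genuine gap, and it is exactly where the substance of the theorem lies. You correctly observe that countable subadditivity (Lemma \ref{lem:subadditivity}) does not give what is needed, but you then fall back on the claim that a monotone, countably subadditive set function is ``automatically continuous from below on increasing unions''. That is false for outer measures in general: on $X=\bbbn$, set $\gamma(\emptyset)=0$, $\gamma(A)=1$ for finite non-empty $A$, and $\gamma(A)=2$ for infinite $A$; this is monotone and countably subadditive, yet $\gamma\left(\cup_i\{1,\dots,i\}\right)=2>1=\lim_i\gamma(\{1,\dots,i\})$. Continuity from below is automatic only under a regularity hypothesis (every set has a measurable hull of the same outer measure), and no such regularity is available for $\UpProb$ a priori. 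The paper's proof of (\ref{eq:condition-2}) is genuinely non-trivial: it shows that $S_i(x):=\UpProb(A_i\givn x)$ is a non-negative superfarthingale (Lemma \ref{lem:UpProb-superfarthingale}), invokes the game-theoretic form of L\'evy's zero-one law (Proposition \ref{prop:levy}) to get $\limsup_n S_i(\pi^n)\ge1$ for almost all $\pi\in A_i$, and then passes to the increasing limit $S=\sup_i S_i$, a superfarthingale with initial value $\lim_i\UpProb(A_i)$ that succeeds almost everywhere on $\cup_i A_i$; the exceptional null set is absorbed by an auxiliary farthingale of arbitrarily small initial value. None of this machinery appears in your proposal, so condition (\ref{eq:condition-2}) is unproved.

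Your handling of (\ref{eq:condition-1}) is fine, and your route to (\ref{eq:condition-3}) --- transferring the problem to $\UpProbMeas$ via Lemma \ref{lem:compact} and running a compactness-of-$\Phi$ argument --- differs from the paper's Lemma \ref{lem:condition-3}, which stays on the game-theoretic side and uses a diagonal argument on the finite-level approximations $K_{n,i}$; your version is workable in principle. One caution: the total-variation comparison of $\Prob_{\phi_{i_k}}$ with $\Prob_{\phi}$ on finite coordinate blocks only controls events determined by finitely many coordinates, whereas $K_j^{\phi_{i_k}}$ for a general compact $K_j$ is not such an event, so you would still need to interpose the finite-level hulls of $K_j$ exactly as the paper does. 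That is repairable; the missing proof of (\ref{eq:condition-2}) is not.
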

It is obvious that $\UpProb$ satisfies condition (\ref{eq:condition-1}).
The following two statements establish
conditions (\ref{eq:condition-2}) and (\ref{eq:condition-3}).
Condition (\ref{eq:condition-3}) is easier to check:
it can be extracted from the proof of Lemma \ref{lem:compact}.
\begin{lemma}\label{lem:condition-3}
  If $K_1\supseteq K_2\supseteq\cdots$ is a nested sequence of compact sets in $\Pi$,
  \begin{equation}\label{eq:condition-3-UpProb}
    \UpProb
    \left(
      \cap_{i=1}^{\infty} K_i
    \right)
    =
    \lim_{i\to\infty}
    \UpProb(K_i).
  \end{equation}
\end{lemma}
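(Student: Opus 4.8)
The plan is to exploit the structure already assembled in the proof of Lemma \ref{lem:compact}. The inequality $\UpProb(\cap_i K_i)\le\lim_i\UpProb(K_i)$ is immediate from monotonicity (condition \eqref{eq:condition-1}), since $\cap_i K_i\subseteq K_j$ for every $j$ and the sequence $\UpProb(K_i)$ is nonincreasing, hence convergent. So the entire content of the lemma is the reverse inequality $\UpProb(\cap_i K_i)\ge\lim_i\UpProb(K_i)=:c$. I expect to obtain this essentially for free from the construction in Lemma \ref{lem:compact}: there we took a nested sequence of closed ``level-$i$'' sets $E_i$ with $E=\cap_i E_i$ and built superfarthingales $W_i$ with $W_i(\Lambda)=\Prob^{\phi_i}(E_i)$, and we showed $\UpProb(E)\le c:=\inf_i W_i(\Lambda)\le\Prob_\phi(E^\phi)\le\UpProbMeas(E)\le\UpProb(E)$, forcing all these quantities to be equal. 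The only adaptation needed is to run that same argument with the $E_i$ replaced by the given $K_i$ (after first passing to their ``$i$-truncations'' $K_i'$, i.e. replacing each $K_i$ by the closed set $\{\pi:\Gamma(\pi^i)\subseteq K_i\text{ fails}\}^c$-style enlargement so that \eqref{eq:level-i} holds and $\cap_i K_i'=\cap_i K_i$; this is a standard move and $K_i'$ is still compact because it is a finite-stage condition). One must also arrange the sequence $K_i'$ to be nested, which can be enforced by replacing $K_i'$ with $\cap_{j\le i}K_j'$.

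With that setup, the proof of Lemma \ref{lem:compact} yields, verbatim, a forecasting system $\phi$ with $\Prob_\phi\bigl((\cap_i K_i)^\phi\bigr)\ge c$, where $c=\inf_i W_i(\Lambda)=\lim_i W_i(\Lambda)$ and $W_i(\Lambda)=\Prob^{\phi_i}(K_i')\ge\UpProb$-value one needs. The key point to check is the chain of inequalities
\begin{equation*}
  \UpProb\Bigl(\cap_{i=1}^{\infty}K_i\Bigr)
  \le
  c
  \le
  \Prob_\phi\Bigl(\bigl(\cap_{i=1}^{\infty}K_i\bigr)^{\phi}\Bigr)
  \le
  \UpProbMeas\Bigl(\cap_{i=1}^{\infty}K_i\Bigr)
  \le
  \UpProb\Bigl(\cap_{i=1}^{\infty}K_i\Bigr),
\end{equation*}
the last inequality by Theorem \ref{thm:easy-way}. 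The first inequality holds because each $W_i$, being a nonnegative superfarthingale with $W_i(x)=1$ whenever $\Gamma(x)\subseteq K_i$ and hence whenever $\Gamma(x)\subseteq\cap_j K_j$, witnesses $\UpProb(\cap_j K_j)\le W_i(\Lambda)$, so $\UpProb(\cap_j K_j)\le\inf_i W_i(\Lambda)=c$. Thus $c=\Prob_\phi((\cap_i K_i)^\phi)$ sits between $\UpProb(\cap_i K_i)$ and itself, and in particular $\lim_i\UpProb(K_i)\le\lim_i W_i(\Lambda)=c=\UpProb(\cap_i K_i)$—wait, that needs $\UpProb(K_i)\le W_i(\Lambda)$, which again holds since $W_i$ witnesses $\UpProb(K_i')\le W_i(\Lambda)$ and $K_i\subseteq K_i'$ gives $\UpProb(K_i)\le\UpProb(K_i')$. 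This closes the argument.

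The main obstacle, as I see it, is bookkeeping rather than mathematics: making precise the passage from the compact sets $K_i$ to the nested, finite-stage-measurable closed sets $K_i'$ and verifying that $\cap_i K_i'=\cap_i K_i$ and that the $W_i$ built from $K_i'$ still dominate $1$ on $\Gamma(x)$ for $x$ with $\Gamma(x)\subseteq K_i$. One subtle spot is that in Lemma \ref{lem:compact} the set $E$ was assumed to be exactly the intersection of its level sets, whereas here $\cap_i K_i$ might be strictly smaller than $\cap_i K_i'$ if the truncation is done carelessly; the fix is to define $K_i'$ as the set of $\pi\in\Pi$ such that every length-$i$ prefix of a point of $K_i$ is a prefix of $\pi$—concretely $K_i':=\{\pi:\pi^i\in\{\kappa^i:\kappa\in K_i\}\}$—which is closed (indeed clopen in the relevant sense), compact, satisfies \eqref{eq:level-i}, contains $K_i$, and has $\cap_i K_i'=\cap_i K_i$ because $K_i$ is compact (a point in every $K_i'$ has, for each $i$, a prefix extendable inside $K_i$, and compactness upgrades this to membership in $\cap_i K_i$). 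After that, the remark in the text that condition \eqref{eq:condition-3} ``can be extracted from the proof of Lemma \ref{lem:compact}'' is exactly right, and nothing further is required.
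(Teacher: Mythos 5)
Your proof is correct and takes essentially the same route as the paper's: both extract the result from the proof of Lemma \ref{lem:compact} by replacing each $K_i$ with a closed level-$i$ set satisfying (\ref{eq:level-i}) whose intersection is still $\cap_i K_i$ (your $K_i'$ is exactly the paper's diagonal sequence $K_{i,i}$), and your verification that $\cap_i K_i'=\cap_i K_i$ via closedness of the $K_n$ is sound. The only difference is bookkeeping --- the paper phrases the conclusion as a doubly-indexed limit interchange, while you rerun the compact-case construction directly on the $K_i'$ --- and your parenthetical that $K_i'$ is ``clopen'' is inaccurate (only closedness holds, and only closedness is needed).
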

\begin{proof}
  We will use the equality
  $
    \UpProb(E)
    =
    \lim_{i\to\infty}
    \UpProb(E_i)
  $,
  in the notation of the proof of Lemma \ref{lem:compact}.
  This equality follows from
  \begin{equation*}
    \UpProb(E)
    =
    c
    =
    \lim_{i\to\infty}
    W_i(\Lambda)
    \ge
    \lim_{i\to\infty}
    \UpProb(E_i)
  \end{equation*}
  (the opposite inequality is obvious).

  Represent each $K_n$ in the form $K_n=\cap_{i=1}^{\infty}E_i$
  where $E_1\supseteq E_2\supseteq\cdots$
  and each $E_i$ satisfies (\ref{eq:level-i});
  we will write $K_{n,i}$ in place of $E_i$.
  Without loss of generality we will assume that
  $
    K_{1,i}\supseteq K_{2,i}\supseteq\cdots
  $
  for all $i$.
  Then the set $K:=\cap_{i=1}^{\infty} K_i$ can be represented as
  $K=\cap_{i=1}^{\infty}K_{i,i}$,
  and so (\ref{eq:condition-3-UpProb}) follows from
  \begin{multline*}
    \UpProb(K)
    =
    \UpProb
    \left(
      \cap_{i=1}^{\infty} K_{i,i}
    \right)
    =
    \lim_{i\to\infty}
    \UpProb(K_{i,i})
    =
    \lim_{n\to\infty}
    \lim_{i\to\infty}
    \UpProb(K_{n,i})\\
    =
    \lim_{n\to\infty}
    \UpProb
    \left(
      \cap_{i=1}^{\infty}
      K_{n,i}
    \right)
    =
    \lim_{n\to\infty}
    \UpProb(K_n).
    \qedhere
  \end{multline*}
\end{proof}

To check condition (\ref{eq:condition-2}) for $\UpProb$,
we will need the game-theoretic version,
proved in \cite{\ShaferLevy},
of L\'evy's zero-one law
(\cite{levy:1937}, Section 41).
For each $x\in\Pi^{\diamond}$,
define the \emph{conditional upper game-theoretic probability} of $E\subseteq\Pi$ by
\begin{multline*}
  \UpProb(E\givn x)
  :=\\
  \inf
  \left\{
    a
    \st
    \exists V:
    V(x)=a
    \text{ and }
    \forall\pi\in E\cap\Gamma(x):
    \limsup_n V(\pi^n)\ge1
  \right\},
\end{multline*}
where $V$ ranges over the non-negative (super)farthingales.
\begin{proposition}[\cite{\ShaferLevy}]\label{prop:levy}
  Let $E\subseteq\Pi$.
  For almost all $\pi\in E$,
  \begin{equation}\label{eq:goal}
    \UpProb(E\givn\pi^n)
    \to
    1
  \end{equation}
  as $n\to\infty$.
  (In other words,
  there exists a prequential event $N$ such that $\UpProb(N)=0$
  and (\ref{eq:goal}) holds for all $\pi\in E\setminus N$.)
\end{proposition}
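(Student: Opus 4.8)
The plan is to reduce the proposition, via the countable subadditivity of $\UpProb$, to a single uniform estimate, and then to prove that estimate by an explicit compounding betting strategy. First I would write $U(x):=\UpProb(E\givn x)$ for $x\in\Pi^{\diamond}$. The constant farthingale $V\equiv1$ attains the value $1$ everywhere, so $0\le U(x)\le1$ for all $x$; hence, for $\pi\in E$, the failure of $U(\pi^n)\to1$ is the same as $\liminf_n U(\pi^n)<1$, which in turn means that $U(\pi^n)<q$ for infinitely many $n$ for some rational $q\in(0,1)$. Thus one may take the exceptional set of the proposition to be
\[
  N:=\bigcup_{q\in\bbbq\cap(0,1)}A_q,
  \qquad
  A_q:=\bigl\{\pi\in E\st U(\pi^n)<q\text{ for infinitely many }n\bigr\},
\]
and by Lemma~\ref{lem:subadditivity} it suffices to show $\UpProb(A_q)=0$ for each fixed rational $q\in(0,1)$.

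Fix such a $q$ and an $\epsilon>0$. I would then construct a non-negative superfarthingale $L$ with $L(\Lambda)=\epsilon$ and $\limsup_n L(\pi^n)=\infty$ for every $\pi\in A_q$; since $\UpProb$ is unchanged when $V$ ranges over non-negative superfarthingales, this gives $\UpProb(A_q)\le\epsilon$, and letting $\epsilon\downarrow0$ yields $\UpProb(A_q)=0$. The strategy behind $L$ proceeds in successive ``runs''. It keeps the capital constant until the first time $\tau_1$ at which $U(\pi^{\tau_1})<q$; if there is no such time then $\pi\notin A_q$ and nothing more is needed. Otherwise, writing $x_1:=\pi^{\tau_1}$, the strict inequality $\UpProb(E\givn x_1)<q$ furnishes, by definition, a non-negative superfarthingale $V^{x_1}$ with $0<V^{x_1}(x_1)<q$ and $\limsup_n V^{x_1}(\pi^n)\ge1$ for every $\pi\in E\cap\Gamma(x_1)$; the gambler now switches, on $\Gamma(x_1)$, to the rescaled process $\bigl(L(\pi^{\tau_1})/V^{x_1}(x_1)\bigr)V^{x_1}$, whose value at $x_1$ matches the current capital and which, along any $\pi\in E\cap\Gamma(x_1)$, strictly exceeds $(1/q)\,L(\pi^{\tau_1})$ at some finite time $\sigma_1>\tau_1$ --- this is exactly where the \emph{strict} inequality $V^{x_1}(x_1)<q$ is used. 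At $\sigma_1$ the gambler begins a fresh run in the same manner (wait for the next time $U$ drops below $q$, then rescale the corresponding superfarthingale so that its value equals the current capital), and iterates. For $\pi\in A_q$ one has $\pi\in E\cap\Gamma(x_k)$ at the start $x_k$ of every run --- this is the only place where the hypothesis $\pi\in E$ enters --- so each run terminates in finite time, a further drop of $U$ below $q$ is always available to trigger the next one, and the capital at the end of the $k$-th run exceeds $\epsilon/q^{k}\to\infty$; hence $\limsup_n L(\pi^n)=\infty$ for $\pi\in A_q$.

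The step I expect to be the real work is checking that this switching prescription genuinely defines a non-negative superfarthingale with initial value exactly $\epsilon$: that $L$ is a well-defined function of the current position (the run structure being recoverable from each finite prefix), that splicing the runs at the path-dependent --- and possibly non-measurable --- times $\tau_k$ and $\sigma_k$ preserves the superfarthingale inequality obtained from (\ref{eq:farthingale}) by replacing ``$=$'' with ``$\ge$'', that the rescalings never make the capital negative, and that off $A_q$ --- where some run may never terminate, or only finitely many runs occur --- $L$ is still a legitimate superfarthingale even though there it need not get large. A cleaner-looking but essentially equivalent alternative would be to observe that $U=\UpProb(E\givn\cdot)$ is itself a non-negative superfarthingale (by a short splicing argument), apply the game-theoretic martingale convergence theorem to obtain $U(\pi^n)\to U_\infty(\pi)$ off a null set, and then identify the limit on $E$; but the remaining identification step appears to cost about the same effort.
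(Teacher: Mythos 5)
Your proposal is correct and follows essentially the same route as the paper's own proof: reduce via countable subadditivity to a fixed rational threshold $q$ (the paper's $a$), then build a compounding betting strategy that waits for $\UpProb(E\givn\pi^n)$ to drop below $q$, runs a rescaled witness (super)farthingale until the capital multiplies by more than $1/q$, and iterates, so that the capital tends to infinity on the exceptional set. The cosmetic differences (starting capital $\epsilon$ versus $1$, superfarthingales versus farthingales) do not change the argument.
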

\begin{proof}
  It suffices to construct a non-negative farthingale $V$ starting from 1
  that tends to $\infty$ on the sequences $\pi\in E$
  for which (\ref{eq:goal}) is not true.
  Without loss of generality
  we replace ``for which (\ref{eq:goal}) is not true'' by
  \begin{equation*}
    \liminf_{n\to\infty}
    \UpProb(E\givn\pi^n)
    <
    a,
  \end{equation*}
  where $a\in(0,1)$ is a given rational number
  (see Lemma \ref{lem:subadditivity}).

  Let $\pi$ be any sequence in $\Pi$;
  we will define $V(\pi^n)$ by induction for $n=1,2,\ldots$
  (intuitively, we will describe a gambling strategy
  with capital process $V$).
  Start with 1 monetary unit:
  $V(\Lambda):=1$.
  Keep setting $V(\pi^n):=1$, $n=1,2,\ldots$,
  until $\UpProb(E\givn\pi^n) < a$
  (if this never happens, $V(\pi^n)$ will be $1$ for all $n$).
  Let $N_1$ be the first $n$ when this happens:
  $\UpProb(E\givn\pi^{N_1}) < a$
  but $\UpProb(E\givn\pi^{n}) \ge a$ for all $n<N_1$.
  Choose a non-negative farthingale $S_1$ starting at $\pi^{N_1}$ from $1$,
  $S_1(\pi^{N_1}) = 1$,
  whose upper limit exceeds $1/a$ on all extensions of $\pi^{N_1}$ in $E$.
  Keep setting $V(\pi^n):=S_1(\pi^n)$, $n=N_1,N_1+1,\ldots$,
  until $S_1(\pi^n)$ reaches a value $s_1>1/a$.
  After that keep setting $V(\pi^n):=V(\pi^{n-1})$
  until $\UpProb(E\givn\pi^n) < a$.
  Let $N_2$ be the first $n$ when this happens.
  Choose a non-negative farthingale $S_2$ starting at $\pi^{N_2}$ from $s_1$,
  $S_2(\pi^{N_2}) = s_1$,
  whose upper limit exceeds $s_1/a$ on all extensions of $\pi^{N_2}$ in $E$.
  Keep setting $V(\pi^n):=S_2(\pi^n)$, $n=N_2,N_2+1,\ldots$,
  until $S_2(\pi^n)$ reaches a value $s_2>s_1(1/a)>(1/a)^2$.
  After that keep setting $V(\pi^n):=V(\pi^{n-1})$
  until $\UpProb(E\givn\pi^n) < a$.
  Let $N_3$ be the first $n$ when this happens.
  Choose a non-negative farthingale $S_3$ starting at $\pi^{N_3}$ from $s_2$
  whose upper limit exceeds $s_2/a$ on all extensions of $\pi^{N_3}$ in $E$.
  Keep setting $V(\pi^n):=S_3(\pi^n)$, $n=N_3,N_3+1,\ldots$,
  until $S_3$ reaches a value $s_3>s_2(1/a)>(1/a)^3$.
  And so on.
\end{proof}

\begin{lemma} 
  If $A_1\subseteq A_2\subseteq\cdots\subseteq\Pi$
  is a nested sequence of prequential events,
  \begin{equation}\label{eq:condition-2-UpProb}
    \UpProb
    \left(
      \cup_{i=1}^{\infty} A_i
    \right)
    =
    \lim_{i\to\infty}
    \UpProb(A_i).
  \end{equation}
\end{lemma}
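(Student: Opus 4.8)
The inequality ``$\ge$'' in (\ref{eq:condition-2-UpProb}) is immediate from condition (\ref{eq:condition-1}), so the real content is the reverse inequality: writing $c:=\lim_i\UpProb(A_i)=\sup_i\UpProb(A_i)$, I have to show $\UpProb(\cup_i A_i)\le c$. The plan is to assemble, out of the conditional upper probabilities of the $A_i$, a single non-negative superfarthingale $M_\infty$ with $M_\infty(\Lambda)=c$ whose witnessed event $\{\pi:\limsup_n M_\infty(\pi^n)\ge1\}$ contains $\cup_i A_i$ up to a $\UpProb$-null set; then $\UpProb(\cup_i A_i)\le M_\infty(\Lambda)=c$ by subadditivity. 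L\'evy's zero-one law (Proposition \ref{prop:levy}) is precisely the ingredient that guarantees the witnessed event is that large.

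First I would record a general fact: for any prequential event $A$, the function $M^A:\Pi^{\diamond}\to[0,1]$, $M^A(x):=\UpProb(A\givn x)$, is a non-negative superfarthingale with $M^A(\Lambda)=\UpProb(A)$ (it is $[0,1]$-valued since the constant farthingale $1$ is always available, and no measurability is needed). To verify the superfarthingale recursion at a node $x\in\Pi^{\diamond}$ and a forecast $p\in[0,1]$, fix $\epsilon>0$ and take a non-negative superfarthingale $V$ with $V(x)<\UpProb(A\givn x)+\epsilon$ and $\limsup_n V(\pi^n)\ge1$ for all $\pi\in A\cap\Gamma(x)$. Since $\Gamma(x,p,j)\subseteq\Gamma(x)$ for $j\in\{0,1\}$, the same $V$ witnesses $A\cap\Gamma(x,p,j)$ from its value at $(x,p,j)$, so $\UpProb(A\givn(x,p,j))\le V(x,p,j)$, and hence
\begin{multline*}
  \UpProb(A\givn x)+\epsilon>V(x)\ge(1-p)V(x,p,0)+pV(x,p,1)\\
  \ge(1-p)\UpProb(A\givn(x,p,0))+p\UpProb(A\givn(x,p,1));
\end{multline*}
letting $\epsilon\downarrow0$ gives $M^A(x)\ge(1-p)M^A(x,p,0)+pM^A(x,p,1)$.

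Next I would apply Proposition \ref{prop:levy} to each $A_i$ separately, obtaining prequential events $N_i$ with $\UpProb(N_i)=0$ such that $\UpProb(A_i\givn\pi^n)\to1$, hence $\limsup_n M^{A_i}(\pi^n)\ge1$, for every $\pi\in A_i\setminus N_i$. Since the $A_i$ are nested, monotonicity of conditional upper probability gives $M^{A_i}\le M^{A_{i+1}}$ pointwise, so $M_\infty:=\lim_i M^{A_i}$ is a well-defined $[0,1]$-valued function on $\Pi^{\diamond}$; passing to the limit in the recursion for $M^{A_i}$ (legitimate because both summands on the right increase with $i$) shows $M_\infty$ is again a non-negative superfarthingale, with $M_\infty(\Lambda)=\lim_i\UpProb(A_i)=c$. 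Finally, set $E:=\{\pi\in\Pi:\limsup_n M_\infty(\pi^n)\ge1\}$. If $\pi\in(\cup_i A_i)\setminus(\cup_i N_i)$, then $\pi\in A_j\setminus N_j$ for some $j$, so $\limsup_n M^{A_j}(\pi^n)\ge1$, and therefore $\limsup_n M_\infty(\pi^n)\ge1$ because $M_\infty\ge M^{A_j}$; thus $\cup_i A_i\subseteq E\cup(\cup_i N_i)$. Since $M_\infty$ itself witnesses $E$ at level $1$, $\UpProb(E)\le M_\infty(\Lambda)=c$, and $\UpProb(\cup_i N_i)=0$ by Lemma \ref{lem:subadditivity}; so, using subadditivity once more, $\UpProb(\cup_i A_i)\le\UpProb(E)+\UpProb(\cup_i N_i)\le c$, which is (\ref{eq:condition-2-UpProb}).

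The difficulty here is more conceptual than computational: one has to hit on the right intermediary — the conditional-upper-probability superfarthingale $M^{A_i}$, whose pointwise supremum $M_\infty$ collapses the increasing union into a single object — and to recognize that L\'evy's zero-one law is exactly what forces $M_\infty$ to witness all of $\cup_i A_i$. Of the verification steps, the only mildly technical one is establishing the superfarthingale property of $M^A$ in the first step, where the definition of $\UpProb(\cdot\givn x)$ must be unwound with a little care because $\Gamma(x)$ branches over all possible next forecasts; once the ``restrict a near-optimal witness'' device is in place, the remaining steps are routine bookkeeping with nested sets, pointwise limits, and null sets.
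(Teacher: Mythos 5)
Your proof is correct and takes essentially the same route as the paper's: the paper likewise defines $S_i(x):=\UpProb(A_i\givn x)$, invokes Lemma \ref{lem:UpProb-superfarthingale} for the superfarthingale property, applies Proposition \ref{prop:levy} to each $A_i$, and passes to the increasing limit $S=\sup_i S_i$. The only cosmetic differences are that you prove the superfarthingale property of $\UpProb(\cdot\givn x)$ directly via an $\epsilon$-optimal witness rather than by contradiction, and you absorb the exceptional null sets by subadditivity rather than by adding a small auxiliary farthingale to $S$.
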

\begin{proof}
  Let $A_1,A_2,\ldots$ be a nested increasing sequence of prequential events.
  The non-trivial inequality in (\ref{eq:condition-2-UpProb}) is $\le$.
  For each $A_i$ the process
  \begin{equation*}
    S_i(x)
    :=
    \UpProb(A_i\givn x)
  \end{equation*}
  is a non-negative superfarthingale
  (see Lemma \ref{lem:UpProb-superfarthingale} below).
  By Proposition \ref{prop:levy},
  $\limsup_nS_i(\pi^n)\ge1$ for almost all $\pi\in A_i$.
  The sequence $S_i$ is increasing,
  $S_1\le S_2\le\cdots$,
  so the limit $S:=\lim_{i\to\infty}S_i=\sup_{i}S_i$
  exists and is a non-negative superfarthingale
  such that
  $
    S(\Lambda)
    =
    \lim_{i\to\infty}
    \UpProb(A_i)
  $
  and $\limsup_nS(\pi^n)\ge1$ for almost all $\pi\in\cup_iA_i$
  (by Lemma \ref{lem:subadditivity}).
  We can get rid of ``almost'' by adding to $S$
  a non-negative farthingale $V$ that starts at $V(\Lambda)<\epsilon$,
  for an arbitrarily small $\epsilon>0$,
  and satisfies $\limsup_n V(\pi^n)\ge1$
  for all $\pi\in\cup_iA_i$ violating $\limsup_nS(\pi^n)\ge1$.
\end{proof}

\begin{lemma}\label{lem:UpProb-superfarthingale}
  For any prequential event $E$,
  the function $x\in\Pi^{\diamond}\mapsto\UpProb(E\givn x)$
  is a superfarthingale.
\end{lemma}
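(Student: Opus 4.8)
The plan is to check the defining inequality of a superfarthingale directly. Writing $V^{*}(z):=\UpProb(E\givn z)$, what has to be shown is that for every $x\in\Pi^{\diamond}$ and every $p\in[0,1]$
\begin{equation*}
  V^{*}(x)
  \ge
  (1-p)\,V^{*}(x,p,0)+p\,V^{*}(x,p,1);
\end{equation*}
this is precisely (\ref{eq:farthingale}) with ``$=$'' weakened to ``$\ge$'', ranging over all nodes $x$ and all forecasts $p$, and there is no difficulty with the value $\infty$ since $V^{*}\le1$ everywhere (use the constant farthingale~$1$). Since $V^{*}(x)$ is the infimum of $V(x)$ over non-negative superfarthingales $V$ with $\limsup_n V(\pi^n)\ge1$ for all $\pi\in E\cap\Gamma(x)$, it is enough to fix one such $V$ and prove $V(x)\ge(1-p)V^{*}(x,p,0)+p\,V^{*}(x,p,1)$.

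The one idea needed is to restrict $V$ to the subtrees hanging below the two children of $x$. For $i\in\{0,1\}$ put $V_i(z):=V(z)$ when $(x,p,i)$ is a prefix of $z$ (including $z=(x,p,i)$), and $V_i(z):=\infty$ otherwise. Then $V_i$ is again a non-negative superfarthingale: the superfarthingale inequality is local to each node, so it passes from $V$ to $V_i$ at every node lying in the subtree rooted at $(x,p,i)$, while at every node where $V_i=\infty$ it holds vacuously. Furthermore $V_i(x,p,i)=V(x,p,i)$, and if $\pi\in E\cap\Gamma(x,p,i)$ then $\pi\in E\cap\Gamma(x)$ and $\pi^n$ extends $(x,p,i)$ for all large $n$, so $\limsup_n V_i(\pi^n)=\limsup_n V(\pi^n)\ge1$. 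Hence $V_i$ witnesses $V^{*}(x,p,i)\le V(x,p,i)$.

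It now remains only to combine these two inequalities with the superfarthingale inequality for $V$ at the node $x$ with forecast $p$:
\begin{equation*}
  (1-p)\,V^{*}(x,p,0)+p\,V^{*}(x,p,1)
  \le
  (1-p)V(x,p,0)+pV(x,p,1)
  \le
  V(x),
\end{equation*}
and then take the infimum over admissible $V$. I do not expect a genuine obstacle here; the two points to be careful about are (i) the direction of the inequality --- the process is a superfarthingale rather than a farthingale because conditioning on $(x,p,i)$ commits Forecaster's next move to be $p$, so $E\cap\Gamma(x,p,i)$ is typically a proper subset of $E\cap\Gamma(x)$ and $V^{*}(x)$ can strictly exceed the average of $V^{*}(x,p,0)$ and $V^{*}(x,p,1)$ --- and (ii) the (harmless) bookkeeping of padding the restricted processes with $+\infty$ off the relevant subtree. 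As elsewhere in the paper, no measurability of $V$ or $V_i$ is required.
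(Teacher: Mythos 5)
Your proof is correct and is essentially the paper's argument run forwards instead of by contradiction: the paper supposes the superfarthingale inequality fails at some $x$ and $p$, takes a near-optimal witness $V$ for $\UpProb(E\givn x)$, and derives the impossible $V(x,p,j)<\UpProb(E\givn x,p,j)$ for some $j$, which is exactly your observation that $V$ witnesses $\UpProb(E\givn x,p,j)\le V(x,p,j)$. The only (harmless) extra step in your version is the $\infty$-padded restriction $V_i$; since the definition of $\UpProb(E\givn x,p,i)$ only constrains a witness on $E\cap\Gamma(x,p,i)$, the farthingale $V$ itself already serves as the witness and no restriction is needed.
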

\begin{proof}

  Suppose there are $x\in\Pi^{\diamond}$ and $p\in[0,1]$ such that
  \begin{equation*}
    \UpProb(E\givn x)
    <
    (1-p)
    \UpProb(E\givn x,p,0)
    +
    p
    \UpProb(E\givn x,p,1).
  \end{equation*}
  Then there exists a non-negative farthingale $V$
  with $\limsup_n V(\pi^n)\ge1$ for all $\pi\in E\cap\Gamma(x)$
  that satisfies
  \begin{equation*}
    V(x)
    <
    (1-p)
    \UpProb(E\givn x,p,0)
    +
    p
    \UpProb(E\givn x,p,1)
  \end{equation*}
  and, therefore,
  \begin{equation*}
    (1-p) V(x,p,0)
    +
    p V(x,p,1)
    <
    (1-p)
    \UpProb(E\givn x,p,0)
    +
    p
    \UpProb(E\givn x,p,1).
  \end{equation*}
  The last inequality implies that there exists $j\in\{0,1\}$ such that
  $V(x,p,j)<\UpProb(E\givn x,p,j)$,
  which is impossible.
\end{proof}

This completes the proof of Theorem \ref{thm:game-capacity}.
Let us now check that measure-theoretic probability is also a capacity.
\begin{lemma}\label{lem:meas-capacity}
  The set function $\UpProbMeas$ is a capacity.
\end{lemma}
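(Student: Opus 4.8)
The plan is to verify, one at a time, the three conditions (\ref{eq:condition-1})--(\ref{eq:condition-3}) in the definition of a capacity, leaning on the facts already established for $\UpProb$ whenever possible. Two elementary observations will be used throughout: the map $E\mapsto E^{\phi}$ is monotone in $E$ and satisfies $(\cup_i A_i)^{\phi}=\cup_i A_i^{\phi}$, and $\UpProbMeas(E)=\sup_{\phi}\Prob_{\phi}(E^{\phi})$ with $\Prob_{\phi}$ understood as outer measure on $\Omega$. Condition (\ref{eq:condition-1}) is then immediate: $A\subseteq B$ implies $A^{\phi}\subseteq B^{\phi}$ for every $\phi$, hence $\Prob_{\phi}(A^{\phi})\le\Prob_{\phi}(B^{\phi})$ by monotonicity of outer measure, and taking $\sup_{\phi}$ gives $\UpProbMeas(A)\le\UpProbMeas(B)$.

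For condition (\ref{eq:condition-2}) I would take a nested increasing sequence $A_1\subseteq A_2\subseteq\cdots$; the sets $A_i^{\phi}$ then increase in $i$, and for each fixed $\phi$ the outer measure $\Prob_{\phi}$ is continuous from below along them. (This is the standard property of the outer measure generated by a finite measure: pick Borel hulls $M_i\supseteq A_i^{\phi}$ with $\Prob_{\phi}(M_i)=\Prob_{\phi}(A_i^{\phi})$, replace them by the increasing Borel sets $N_i:=\cap_{j\ge i}M_j$, which still contain $A_i^{\phi}$ and still satisfy $\Prob_{\phi}(N_i)=\Prob_{\phi}(A_i^{\phi})$, and conclude $\Prob_{\phi}(\cup_i A_i^{\phi})\le\lim_i\Prob_{\phi}(N_i)=\sup_i\Prob_{\phi}(A_i^{\phi})$, the reverse inequality being monotonicity.) This yields
\[
  \UpProbMeas\Bigl(\cup_{i=1}^{\infty}A_i\Bigr)
  =\sup_{\phi}\sup_i\Prob_{\phi}(A_i^{\phi})
  =\sup_i\sup_{\phi}\Prob_{\phi}(A_i^{\phi})
  =\sup_i\UpProbMeas(A_i)
  =\lim_{i\to\infty}\UpProbMeas(A_i),
\]
where interchanging the two suprema is trivial and the last equality uses the monotonicity from condition (\ref{eq:condition-1}).

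The one genuinely structural step is condition (\ref{eq:condition-3}), and the plan there is to reduce it to the compact case already settled. Since $\Pi$ is a countable product of compact Hausdorff spaces it is itself compact Hausdorff, so ``compact'' and ``closed'' coincide for its subsets; in particular each $K_i$, and $K:=\cap_{i=1}^{\infty}K_i$, are compact. Lemma \ref{lem:compact} then gives $\UpProbMeas(K_i)=\UpProb(K_i)$ for every $i$ and $\UpProbMeas(K)=\UpProb(K)$, and combining this with Lemma \ref{lem:condition-3} --- which is precisely condition (\ref{eq:condition-3}) for $\UpProb$ --- yields
\[
  \UpProbMeas\Bigl(\cap_{i=1}^{\infty}K_i\Bigr)
  =\UpProb\Bigl(\cap_{i=1}^{\infty}K_i\Bigr)
  =\lim_{i\to\infty}\UpProb(K_i)
  =\lim_{i\to\infty}\UpProbMeas(K_i).
\]
I expect condition (\ref{eq:condition-2}) to require the most attention, since it is the only one where one must reason directly about the outer measures $\Prob_{\phi}$ rather than invoke an already-proved result; but that reasoning is just the routine continuity-from-below argument for outer measures of finite measures, after which the supremum over $\phi$ passes through harmlessly. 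Conditions (\ref{eq:condition-1}) and (\ref{eq:condition-3}) come essentially for free from the groundwork already laid (monotonicity of outer measure, and Lemmas \ref{lem:compact} and \ref{lem:condition-3}).
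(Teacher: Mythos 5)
Your proof is correct and follows essentially the same route as the paper: condition (\ref{eq:condition-1}) is immediate, condition (\ref{eq:condition-3}) is obtained by reducing to $\UpProb$ via Lemmas \ref{lem:compact} and \ref{lem:condition-3}, and condition (\ref{eq:condition-2}) rests on continuity from below of the outer measures $\Prob_{\phi}$ for each fixed $\phi$ followed by an exchange of suprema. The only difference is cosmetic: the paper phrases the last step as a contradiction and treats the failure of continuity from below as ``obviously wrong,'' whereas you supply the standard measurable-hull argument explicitly.
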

\begin{proof}
  Property (\ref{eq:condition-1}) is obvious for $\UpProbMeas$.
  Property (\ref{eq:condition-3})
  follows from Lemmas \ref{lem:compact}
  and \ref{lem:condition-3}.

  \ifFULL\bluebegin
    We will also give an independent proof.
    Let $K_1\supseteq K_2\supseteq\cdots$
    be a decreasing sequence of compact sets,
    and let $\epsilon>0$.
    For each $i=1,2,\ldots$ choose a forecasting system $\phi_i$ satisfying
    \begin{equation*}
      \Prob^{\phi_i}(K_i)
      \ge
      \UpProbMeas(K_i)
      -
      \epsilon.
    \end{equation*}
    Since the set of all forecasting systems
    is compact in the product topology,
    we can choose a subsequence $\phi_{i_k}$, $i_k\uparrow\infty$,
    that converges in that topology to a forecasting system $\phi$.
    The probability measures $\Prob_{\phi_{i_k}}$
    then converge to $\Prob_{\phi}$ in the weak topology.
    By the standard properties of weak convergence
    (see, e.g., \cite{kechris:1995}, Theorem 17.20),
    for each $K_i$ we have
    \begin{multline*}
      \Prob^{\phi}(K_i)
      \ge
      \limsup_{k\to\infty}
      \Prob^{\phi_{i_k}}(K_i)
      \ge
      \limsup_{k\to\infty}
      \Prob^{\phi_{i_k}}(K_{i_k})\\
      \ge
      \limsup_{k\to\infty}
      \UpProbMeas(K_{i_k})
      -
      \epsilon
      =
      \lim_{i\to\infty}
      \UpProbMeas(K_i)
      -
      \epsilon.
    \end{multline*}
    [Some justification for the first inequality has to be spelled out:
    e.g., $\Prob^{\phi}(K_i)$ is $\Prob_{\phi}(K_i^{\phi})$,
    and the presence of $\phi$ in two places complicates things.]
    This implies
    \begin{equation*}
      \Prob^{\phi}
      \left(
        \cap_{i=1}^{\infty} K_i
      \right)
      \ge
      \lim_{i\to\infty}
      \UpProbMeas(K_i)
      -
      \epsilon
    \end{equation*}
    and, therefore,
    \begin{equation*}
      \UpProbMeas
      \left(
        \cap_{i=1}^{\infty} K_i
      \right)
      \ge
      \lim_{i\to\infty}
      \UpProbMeas(K_i)
      -
      \epsilon.
    \end{equation*}
    Since this is true for any $\epsilon>0$,
    the proof is complete.
  \blueend\fi

  Let us now check the remaining property (\ref{eq:condition-2}),
  with $\UpProbMeas$ as $\gamma$.
  Suppose there exists an increasing sequence
  $A_1\subseteq A_2\subseteq\cdots\subseteq X$ of prequential events
  such that
  \begin{equation*}
    \UpProbMeas
    \left(
      \cup_{i=1}^{\infty} A_i
    \right)
    >
    \lim_{i\to\infty}
    \UpProbMeas(A_i).
  \end{equation*}
  Let $\phi$ be a forecasting system satisfying
  \begin{equation*}
    \Prob^{\phi}
    \left(
      \cup_{i=1}^{\infty} A_i
    \right)
    >
    \lim_{i\to\infty}
    \UpProbMeas(A_i).
  \end{equation*}
  Then $\phi$ will satisfy
  $
    \Prob^{\phi}
    \left(
      \cup_{i=1}^{\infty} A_i
    \right)
    >
    \lim_{i\to\infty}
    \Prob^{\phi}(A_i)
  $,
  which is equivalent to the obviously wrong
  $
    \Prob_{\phi}
    \left(
      \cup_{i=1}^{\infty} A_i^{\phi}
    \right)
    >
    \lim_{i\to\infty}
    \Prob_{\phi}(A_i^{\phi})
  $.
  \ifFULL\bluebegin

    Let us check that for any forecasting system $\phi$
    the set function $\Prob^{\phi}$ is a capacity.
    It is well known that $\Prob_{\phi}$ is a capacity
    (see, e.g., \cite{kechris:1995}, Exercise 30.3).
    The property
    $
      \Prob^{\phi}
      \left(
        \cap_{i=1}^{\infty} K_i
      \right)
      =
      \lim_{i\to\infty}
      \Prob^{\phi}(K_i)
    $,
    i.e.,
    $
      \Prob_{\phi}
      \left(
        \cap_{i=1}^{\infty} K_i^{\phi}
      \right)
      =
      \lim_{i\to\infty}
      \Prob_{\phi}(K_i^{\phi})
    $
    for compact $K_1\supseteq K_2\supseteq\cdots$
    follows from the corresponding property for $\Prob_{\phi}$
    and the fact that each $K_i^{\phi}$ is a compact set
    as the preimage $\{\omega\in\Omega\st\omega^{\phi}\in K_i\}$
    in the compact space $\Omega$
    of the closed set $K_i$
    under the continuous transformation $\omega\mapsto\omega^{\phi}$.
    The other two defining properties of capacities
    are even easier to check for $\Prob^{\phi}$.
  \blueend\fi
\end{proof}

In combination with Choquet's capacitability theorem,
Theorem \ref{thm:game-capacity} and Lemma \ref{lem:meas-capacity}
allow us to finish the proof of Theorem \ref{thm:main}.

\begin{ChoquetTheorem}[\cite{choquet:1954}]
  If $X$ is a compact metrizable space,
  $\gamma$ is a capacity on $X$,
  and $E\subseteq X$ is an analytic set,
  \begin{equation*}
    \gamma(E)
    =
    \sup
    \left\{
      \gamma(K)
      \st
      K \text{ is compact},
      K\subseteq E
    \right\}.
  \end{equation*}
\end{ChoquetTheorem}

For a proof of Choquet's theorem,
see, e.g., \cite{kechris:1995}, Theorem 30.13.

\begin{proof}[Proof of Theorem \ref{thm:main}]
  Combining Choquet's capacitability theorem
  (applied to the compact metrizable space $\Pi$),
  Lemma \ref{lem:compact},
  Theorem \ref{thm:game-capacity},
  and Lemma \ref{lem:meas-capacity},
  we obtain
  \begin{equation*}
    \UpProb(E)
    =
    \sup_{K\subseteq E}\UpProb(K)
    =
    \sup_{K\subseteq E}\UpProbMeas(K)
    =
    \UpProbMeas(E),
  \end{equation*}
  $K$ ranging over the compact sets.
\end{proof}

\begin{remark}
  The fact that game-theoretic probability and measure-theoretic probability
  are capacities
  has allowed us to prove their coincidence on the analytic sets,
  and it might be useful for other purposes as well.
  In general, neither of these capacities is \emph{strongly subadditive},
  in the sense of satisfying
  \begin{equation*}
    \gamma(A\cup B)
    +
    \gamma(A\cap B)
    \le
    \gamma(A)
    +
    \gamma(B)
  \end{equation*}
  for all prequential events $A$ and $B$.
  To demonstrate this it suffices,
  in view of Theorem \ref{thm:main},
  to find analytic sets $A$ and $B$ that violate
  \begin{equation}\label{eq:strictly-subadditive}
    \UpProb(A\cup B)
    +
    \UpProb(A\cap B)
    \le
    \UpProb(A)
    +
    \UpProb(B).
  \end{equation}
  We can define $\UpProb(E)$ for subsets of $\Pi^n$
  by (\ref{eq:upper-game}) with $\limsup_n$ omitted.
  This is an example of subsets $A$ and $B$ of $\Pi^2$
  for which (\ref{eq:strictly-subadditive}) is violated:
  \begin{align}
    A
    &=
    \left\{
      \left(
        0, 0, \frac12, 0
      \right),
      \left(
        \frac12, 0, 0, 0
      \right)
    \right\},\label{eq:A}\\
    B
    &=
    \left\{
      \left(
        0, 0, \frac12, 0
      \right),
      \left(
        \frac12, 1, 0, 0
      \right)
    \right\}.
    \label{eq:B}
  \end{align}
  For these subsets we have
  \begin{equation*}
    \UpProb(A\cup B)
    +
    \UpProb(A\cap B)
    =
    1 + \frac12
    >
    \frac12 + \frac12
    =
    \UpProb(A)
    +
    \UpProb(B).
  \end{equation*}
  To obtain an example of subsets $A$ and $B$ of the full prequential space $\Pi$
  for which (\ref{eq:strictly-subadditive}) is violated,
  it suffices to add $00\dots$ at the end of each element of the sets $A$ and $B$
  defined by (\ref{eq:A}) and (\ref{eq:B}).
\end{remark}

\section{Application to the limit theorems of probability theory}
\label{sec:application}

The \emph{lower game-theoretic probability} of a prequential event $E$
is defined to be $1-\UpProb(\Pi\setminus E)$.
Similarly, the \emph{lower measure-theoretic probability} of a prequential event $E$
is defined to be $1-\UpProbMeas(\Pi\setminus E)$.

The game-theoretic strong law of large numbers
(see, e.g., \cite{shafer/vovk:2001}, Section 3.3)
implies that (\ref{eq:calibration}) holds
with lower game-theoretic probability one.
The standard martingale strong law of large numbers implies
that (\ref{eq:calibration}) holds with lower measure-theoretic probability one.
Our Theorem \ref{thm:main}
establishes the equivalence between these two statements.
Similarly,
Theorem \ref{thm:main} establishes the equivalence
between the game-theoretic law of the iterated logarithm for binary outcomes
(a special case of Theorems 5.1 and 5.2 in \cite{shafer/vovk:2001})
and the martingale law of the iterated logarithm for binary outcomes
in measure-theoretic probability theory.

Transition from game-theoretic to measure-theoretic laws of probability,
corresponding to the inequality $\ge$ in Theorem \ref{thm:main},
depends only on Ville's inequality,
and so can be easily done for a wide variety of prediction protocols
(see, e.g., \cite{shafer/vovk:2001}, Section 8.1).
Transition in the opposite direction,
corresponding to the inequality $\le$,
is more difficult,
and its feasibility has been demonstrated only in a very limited number of cases.

In an important respect Theorem \ref{thm:main}
is only an existence result.
For example,
in combination with the standard martingale strong law of large numbers
in measure-theoretic probability theory
it implies the game-theoretic strong law of large numbers
for binary outcomes,
but the resulting farthingale is very complex.
The corresponding strategy for the gambler
(or Skeptic, in the terminology of \cite{shafer/vovk:2001})
is also very complex.
This contrasts with the simple and efficient gambling strategies
designed in game-theoretic probability:
see, e.g., \cite{shafer/vovk:2001}, Section 3.2,
and \cite{kumon/takemura:2008}.

It would be interesting to design efficient general procedures
producing simple gambling strategies
witnessing that $\UpProb(E)=0$
for natural classes of prequential events satisfying $\UpProbMeas(E)=0$.
For example,
such a procedure might be applicable
to all prequential events satisfying $\UpProbMeas(E)=0$
and situated at a given low level of the Borel
hierarchy.
This would allow an automatic procedure
of transition from measure-theoretic to constructive game-theoretic laws of probability:
e.g., the set of sequences (\ref{eq:sequence})
violating the strong law of large numbers (\ref{eq:calibration})
is in the class $\Sigma^0_3$ of the Borel hierarchy,
and the set of sequences violating the law of the iterated logarithm
is in $\Delta^0_4$.

\ifFULL\bluebegin
  The levels of the Borel hierarchy are:
  \begin{itemize}
  \item
    $\Sigma^0_3$ for the strong law of large numbers,
    since the event
    \begin{equation*}
      \lim_{n\to\infty}
      \frac{1}{n}
      \sum_{i=1}^n
      (y_i-p_i)
      \ne
      0
    \end{equation*}
    is the complement of the $\Pi^0_3$ event
    \begin{equation*}
      \forall\epsilon>0 \exists N \forall n\ge N:
      \frac{1}{n}
      \left|
        \sum_{i=1}^n
        (y_i-p_i)
      \right|
      \le
      \epsilon.
    \end{equation*}
  \item
    $\Delta^0_4$ for the law of the iterated logarithm,
    since the event
    \begin{equation}\label{eq:LIL}
      \lim_{n\to\infty} A_n = \infty
      \text{ and }
      \limsup_{n\to\infty}
      \frac{1}{\sqrt{A_n\ln\ln A_n}}
      \sum_{i=1}^n
      (y_i-p_i)
      \ne
      1,
    \end{equation}
    $A_n$ standing for $\sum_{i=1}^n p_i(1-p_i)$,
    is the intersection of the $\Pi^0_3$ event
    \begin{equation*}
      \forall C \exists N \forall n\ge N:
      A_n \ge C,
    \end{equation*}
    the $\Sigma^0_3$ event
    \begin{equation*}
      \exists \epsilon>0 \forall N \exists n\ge N:
      \frac{1}{\sqrt{A_n\ln\ln A_n}}
      \sum_{i=1}^n
      (y_i-p_i)
      >
      1 + \epsilon,
    \end{equation*}
    and the $\Sigma^0_2$ event
    \begin{equation*}
      \exists \epsilon>0 \exists N \forall n\ge N:
      \frac{1}{\sqrt{A_n\ln\ln A_n}}
      \sum_{i=1}^n
      (y_i-p_i)
      \le
      1 - \epsilon
    \end{equation*}
    (the last two events corresponding to the possibilities $<$ and $>$
    for the $\ne$ in (\ref{eq:LIL})).
  \end{itemize}
  The central limit theorem requires a procedure of transition
  from $\UpProbMeas(E)<\epsilon$ to $\UpProb(E)<\epsilon$.
\blueend\fi

In this article we have only considered the case
where the outcomes $y_n$ are restricted to the binary outcome space
$Y:=\{0,1\}$.
It is easy to extend our results to the case
where $Y$ is any finite set
and Forecaster outputs probability measures on $Y$,
interpreted as his probability forecasts for $y_n$.
It remains an open problem whether it is possible to modify
our definitions in a natural way
so that the equivalence
between game-theoretic and measure-theoretic probability
extends to a wide classes of outcome spaces and prequential events;
this would require imposing suitable measurability or topological conditions
on the farthingales (or superfarthingales)
used in the definition (\ref{eq:upper-game}) of game-theoretic probability.

\ifFULL\bluebegin
  \section{Directions of further research}

  The following steps would be:
  \begin{itemize}
  \item
    Extending some version of Theorem \ref{thm:main} to compact metrizable $Y$,
    still assuming that Forecaster outputs a probability forecast for $y_n$.
    The theorem itself is likely to become false
    since measure-theoretic probability is based on the notion of measurability
    whereas measurability does not play such a fundamental role
    in game-theoretic probability
    (cf.\ the discussion in \cite{shafer/vovk:2001}, pp.~168--169).
    For the theorem to continue to hold,
    the definition of game-theoretical probability might need to be modified
    by imposing suitable measurability or topological conditions
    on the permitted (super)farthingales.
  \item
    A further extension would involve forecast spaces
    different from the set of probability measures on $Y$;
    see \cite{shafer/vovk:2001} for numerous examples
    of such more general forecasting protocols.
  \item
    Extension to non-compact outcome and forecast spaces.
    This would cover, e.g.,
    the unbounded forecasting protocol
    used to state Kolmogorov's strong law of large numbers
    (\cite{shafer/vovk:2001}, Chapter 4).
    Since the assumption of compactness has been so important in this article,
    it might be more promising to try and construct a counterexample
    to the analogue of Theorem \ref{thm:main}
    for the countable outcome space $\bbbn$
    (this space is not compact but does not present any measurability problems).
  \end{itemize}
  The main question in this direction is:
  \begin{quote}
    What is the class $\FFF$ of prequential events $E$
    for which $\UpProb(E)=\UpProbMeas(E)$?
  \end{quote}
  Two capacities on $\Pi$ (for concreteness)
  are said to be \emph{equivalent}
  if they coincide on the compact sets.
  Any two equivalent capacities coincide on the universally capacitable sets
  (and so, by Choquet's theorem, on the analytical sets).
  Let us say that a set $E$ is \emph{universally$^*$ capacitable}
  if its capacity is equal to the infimum
  of the capacities of the Borel sets that contain $E$.
  Every universally capacitable set is universally$^*$ capacitable
  (see, e.g., \cite{kechris:1995}, Exercise 30.17,
  or, for details, \cite{srivastava:1998}, Proposition 4.10.10).
  It can be shown that the smallest class of sets
  on which any two equivalent capacities agree
  coincides with the class of universally$^*$ capacitable sets.
  (This observation and the term ``universally$^*$ capacitable''
  are due to Alexander Kechris.)
  So $\FFF$ contains all universally$^*$ capacitable sets.
  Of course, $\FFF$ can be strictly wider
  than the class of universally$^*$ capacitable sets.

  It should be easy to extend Theorem \ref{thm:main} to bounded functions,
  i.e., to prove that $\UpExpect(F)=\UpExpectMeas(F)$
  for all bounded analytic functions $F:\Pi\to\bbbr$
  and for natural definitions of upper game-expectation $\UpExpect$
  (see \cite{shafer/vovk:2001}, p.~12)
  and upper measure-expectation $\UpExpectMeas$.
  For the functional version of the Choquet capacitability theorem
  see, e.g., \cite{dellacherie:1972}, Theorem II.5.
\fi

\ifJOURNAL
  \section*{Acknowledgements}
\fi
\ifnotJOURNAL
  \subsection*{Acknowledgements}
\fi

This article has greatly benefitted from conversations
with Philip Dawid, Glenn Shafer, and Alexander Shen.
Its main result answers, within the prequential framework, a question
that has been asked independently by several people,
including Shafer and, more recently, Shen.
I am grateful to Alexander Kechris
for his advice about capacities.
\ifnotJOURNAL
  This work was supported in part by EPSRC (grant EP/F002998/1).
\fi

\ifFULL\bluebegin
  \appendix

  \section{Some properties of $\UpProb$}

  \begin{itemize}
  \item
    It is easy to see that the set function $\UpProb$ is an \emph{outer measure},
    i.e., satisfies
    \begin{multline*}
      \UpProb(\emptyset) = 0,
      \quad
      A \subseteq B \Longrightarrow \UpProb(A)\le\UpProb(B),\\
      \UpProb
      \left(
        \cup_{i=1}^{\infty} A_i
      \right)
      \le
      \sum_{i=1}^{\infty} \UpProb(A_i).
    \end{multline*}
    An open problem is to characterize
    the class of \emph{Carath\'eodory measurable} prequential events,
    i.e., prequential events $E$ satisfying
    \begin{equation*}
      \UpProb(E)
      =
      \UpProb(A\cap E)
      +
      \UpProb(A\setminus E)
    \end{equation*}
    for all $A\subseteq\Pi$.
    Another open problem is to characterize the wider class of prequential events $E$
    satisfying $\UpProb(E)=\LowProb(E)$.
  \item
    By Theorem \ref{thm:game-capacity},
    $\UpProb$ is a capacity.
  \item
    The example in the remark at the end of Section \ref{sec:proof}
    shows that game-theoretic and measure-theoretic probability
    are not strictly subadditive.
    This example was inspired
    by the example in \cite{walley:2000}, p.~128.
    It was found by a C++ program.
  \item
    For the restricted prequential space $\prod_{i=1}^{\infty}(F_i\times\{0,1\})$,
    where $F_i$ are finite,
    it is true that, for any prequential event $A$,
    \begin{equation*}
      \UpProb(A)
      =
      \inf_{G\supseteq A}
      \UpProb(G),
    \end{equation*}
    $G$ ranging over the open sets.
    A somewhat dual property for $\UpProbMeas$
    (but true in the usual prequential space $\Pi$)
    is that, for any universally measurable set $A$,
    \begin{equation*}
      \UpProbMeas(A)
      =
      \sup_{K\subseteq A}
      \UpProbMeas(K),
    \end{equation*}
    $K$ ranging over the compact sets.
  \end{itemize}

  \subsection*{Some other open questions and puzzles}

  \begin{enumerate}
  \item
    Is $\LowProb$ (equivalently, $\LowProbMeas$) a capacity?
    In my notes I claim that this would mean
    that game-theoretic and measure-theoretic probability
    coincide on analytical sets;
    is this true?
    Notice that
    \begin{equation*}
      \LowProbMeas(E)
      =
      1 - \sup_{\phi}\Prob_{\phi}\{\omega\st\omega^{\phi}\notin E\}
      =
      \inf_{\phi}\Prob_{\phi}\{\omega\st\omega^{\phi}\in E\}.
    \end{equation*}
  \item
    Give an example of a decreasing sequence $A_1\supseteq A_2\supseteq\cdots$
    of open (or Borel) sets
    such that
    \begin{equation*}
      \UpProbMeas
      \left(
        \cap_{i=1}^{\infty} A_i
      \right)
      <
      \lim_{i\to\infty}
      \UpProbMeas(A_i),
    \end{equation*}
    \begin{equation*}
      \UpProb
      \left(
        \cap_{i=1}^{\infty} A_i
      \right)
      <
      \lim_{i\to\infty}
      \UpProb(A_i).
    \end{equation*}
  \end{enumerate}
\blueend\fi
\end{document}